\DeclareMathOperator*{\argmin}{arg\,min}
\begin{document}

\title{Computing the Fr\'{e}chet Derivative of the Polar Decomposition\thanks{Submitted to the editors August 15, 2016.
\funding{EG has been supported in part by NSF under grants DMS-1411792, DMS-1345013. ML has been supported in part by NSF under grants DMS-1010687, CMMI-1029445, DMS-1065972, CMMI-1334759, DMS-1411792, DMS-1345013.}}}

\author{
  Evan S. Gawlik\thanks{Department of Mathematics, University of California, San Diego
    (\email{egawlik@ucsd.edu}, \email{mleok@math.ucsd.edu}).}
  \and
  Melvin Leok\footnotemark[2]
}

\headers{Computing the Fr\'{e}chet Derivative of the Polar Decomposition}{E. S. Gawlik and M. Leok}

\maketitle

\begin{abstract}
We derive iterative methods for computing the Fr\'{e}chet derivative of the map which sends a full-rank matrix $A$ to the factor $U$ in its polar decomposition $A=UH$, where $U$ has orthonormal columns and $H$ is Hermitian positive definite.  The methods apply to square matrices as well as rectangular matrices having more rows than columns.  Our derivation relies on a novel identity that relates the Fr\'{e}chet derivative of the polar decomposition to the matrix sign function $\mathrm{sign}(X) = X (X^2)^{-1/2}$ applied to a certain block matrix $X$.  
\end{abstract}

\begin{keywords}
  Polar decomposition, Fr\'{e}chet derivative, matrix function, matrix iteration, Newton iteration, Newton-Schulz iteration, matrix sign function
\end{keywords}

\begin{AMS}
  65F30, 15A23, 15A24
\end{AMS}

\section{Introduction}

The polar decomposition theorem asserts that every matrix $A \in \mathbb{C}^{m \times n}$ ($m \ge n$) can be written as the product $A=UH$ of a matrix $U \in \mathbb{C}^{m \times n}$ having orthonormal columns times a Hermitian positive definite matrix $H \in \mathbb{C}^{n \times n}$~\cite[Theorem 8.1]{higham2008functions}.  If $A$ is full-rank, this decomposition is unique, allowing one to define a map $\mathcal{P}$ which sends a full-rank matrix $A \in \mathbb{C}^{m \times n}$ to the factor $\mathcal{P}(A) = U  \in \mathbb{C}^{m \times n}$ in its polar decomposition $A=UH$.  We refer to $U$ as the unitary factor in the polar decomposition of $A$, bearing in mind that this is a slight abuse of terminology when $A$ (and hence $U$) is rectangular.   The aim of this paper is to derive iterative algorithms for computing the Fr\'{e}chet derivative of $\mathcal{P}$.

Our interest in differentiating the polar decomposition stems from several sources.  First, differentiating the polar decomposition gives precise information about the sensitivity of the polar decomposition to perturbations.  This is a topic of longstanding interest in numerical analysis~\cite{li1997relative,li2003new,bhatia1994matrix,kenney1991polar,mathias1993perturbation}, where much of the literature has focused on bounding the deviations in the perturbed factors in the polar decomposition of $A$ after a small-normed perturbation of $A$.  
These analyses often rely on a formula for the Fr\'{e}chet derivative of $\mathcal{P}$ that involves the singular value decomposition of $A$~\cite[Equation 2.18]{kenney1991polar}.
While theoretically useful, such a formula loses some of its appeal in the numerical setting, where computing the singular value decomposition tends to be costly.
As a second source of motivation, differentiating the polar decomposition has proven necessary in the design of
certain schemes for interpolating functions which take values in the special orthogonal group~\cite{gawlik2016embedding}, the group of real square matrices with orthonormal columns and positive determinant.  These interpolation schemes have applications in computer animation, mechanics, and other areas in which continuously varying rotation matrices play a role.

A number of authors have addressed the computation of the Fr\'{e}chet derivatives of other functions of matrices, such as the matrix exponential~\cite{al2009computing,mathias1992evaluating,najfeld1995derivatives}, the matrix logarithm~\cite{al2013computing,kenney1998schur}, the matrix square root~\cite[Section 2]{al2009computing}, the matrix $p^{th}$ root~\cite{higham2013improved,cardoso2012computation,cardoso2011evaluating}, and the matrix sign function $\mathrm{sign}(X) = X (X^2)^{-1/2}$~\cite{kenney1991polar}.  The aforementioned functions, unlike the map $\mathcal{P}$, are examples of \emph{primary matrix functions}.  Roughly speaking, a primary matrix function is a scalar function that has been extended to square matrices in a canonical way; for a precise definition, see~\cite[Section 1.2]{higham2008functions} and~\cite{horn2012matrix}.  The polar decomposition is not a primary matrix function, which is perhaps the main reason that the computation of its Fr\'{e}chet derivative has largely evaded scrutiny until now.

Formally, iterative schemes for computing the Fr\'{e}chet derivatives of matrix functions (be they primary or nonprimary) can be derived as follows.  Let $f : \mathbb{C}^{m \times n} \rightarrow \mathbb{C}^{m \times n}$ be a function with Fr\'{e}chet derivative $L_f$.  That is, given $X \in \mathbb{C}^{m \times n}$, the map $L_f(X,\cdot) : \mathbb{C}^{m \times n} \rightarrow \mathbb{C}^{m \times n}$ is a linear map satisfying
\begin{equation} \label{Frechet}
f(X+E) - f(X) - L_f(X,E) = o(\|E\|)
\end{equation}
for every $E \in \mathbb{C}^{m \times n}$, where $\|\cdot\|$ denotes any matrix norm.
Let $A \in \mathbb{C}^{m \times n}$, and suppose that
\begin{equation} \label{gscheme}
X_{k+1} = g(X_k), \quad X_0 = A
\end{equation}
is an iterative scheme for computing $f(A)$; that is, $X_k \rightarrow f(A)$ as $k \rightarrow \infty$.  Differentiation of~(\ref{gscheme}) with respect to $A$ in the direction $E \in \mathbb{C}^{m \times n}$ yields the coupled iteration
\begin{alignat}{3}
X_{k+1} &= g(X_k), &\quad& X_0 = A, \label{Xupdate0} \\
E_{k+1} &= L_g(X_k,E_k), &\quad& E_0 = E, \label{Eupdate0}
\end{alignat}
for computing $f(A)$ and $L_f(A,E)$.  The validity of this formal derivation, of course, depends on the commutativity of $\lim_{k \rightarrow \infty}$ with differentiation, which is generally nontrivial to establish.

For a primary matrix function $f$, proving the validity of this formal derivation is greatly simplified by the following identity.  For any primary matrix function $f$ and any square matrices $A$ and $E$,
\begin{equation} \label{mathias}
f \begin{pmatrix} A & E \\ 0 & A \end{pmatrix} = \begin{pmatrix} f(A) & L_f(A,E) \\ 0 & f(A) \end{pmatrix},
\end{equation}
provided that $f$ is $2p-1$ times continuously differentiable on an open subset of $\mathbb{C}$ containing the spectrum of $A$, where $p$ is the size of the largest Jordan block of $A$~\cite{mathias1996chain}.
From this it follows that if~(\ref{gscheme}) is an iterative scheme for computing $f(A)$, and if $g$ maps block upper triangular matrices to block upper triangular matrices, then
\begin{equation} \label{XEupdate}
\begin{pmatrix} X_{k+1} & E_{k+1} \\ 0 & X_{k+1} \end{pmatrix} = g\begin{pmatrix} X_k & E_k \\ 0 & X_k \end{pmatrix}, \quad \begin{pmatrix} X_0 & E_0 \\ 0 & X_0 \end{pmatrix} = \begin{pmatrix} A & E \\ 0 & A \end{pmatrix}
\end{equation}
defines an iterative scheme for computing $\begin{pmatrix} f(A) & L_f(A,E) \\ 0 & f(A) \end{pmatrix}$,
provided that it converges and provided that $f$ has the requisite regularity to apply~(\ref{mathias}).  Using~(\ref{mathias}) again to isolate each block of the iteration~(\ref{XEupdate}), one obtains the coupled iteration~(\ref{Xupdate0}-\ref{Eupdate0}).  Details behind this argument, as well as an example of its application, can be found in~\cite[Section 2]{al2009computing}.

Our main result in this paper, Theorem~\ref{thm:dpolariter}, establishes the validity of schemes like~(\ref{Xupdate0}-\ref{Eupdate0}) when the function $f$ under consideration is the function $\mathcal{P}$ which sends $A$ to the unitary factor $U$ in its polar decomposition, \emph{even though $\mathcal{P}$ is not a primary matrix function}.  In particular,  
\[
\mathcal{P} \begin{pmatrix} A & E \\ 0 & A \end{pmatrix} \neq \begin{pmatrix} \mathcal{P}(A) & L_{\mathcal{P}}(A,E) \\ 0 & \mathcal{P}(A) \end{pmatrix},
\]
so the argument in the preceding paragraph does not apply.
Instead, our derivation relies on a novel identity that relates the Fr\'{e}chet derivative of $\mathcal{P}$ to the matrix sign function $\mathrm{sign}(X) = X (X^2)^{-1/2}$ applied to a certain block matrix $X$; see Theorem~\ref{thm:signblockmat}.

One notable corollary of Theorem~\ref{thm:dpolariter} is that the popular Newton iteration~\cite{higham1986computing}
\[
X_{k+1} = \frac{1}{2}(X_k + X_k^{-*}), \quad X_0 = A
\]
for computing the unitary factor $\mathcal{P}(A)=U$ in the polar decomposition $A=UH$ of a square matrix $A$ extends to a coupled iteration for computing $\mathcal{P}(A)$ and its Fr\'{e}chet derivative.
In particular, Corollary~\ref{cor:Newton} shows that for any nonsingular $A \in \mathbb{C}^{n \times n}$ and any $E \in \mathbb{C}^{n \times n}$, the scheme
\begin{alignat}{3}
X_{k+1} &= \frac{1}{2}(X_k + X_k^{-*}), &\quad& X_0 = A, \label{XupdateNewton0} \\
E_{k+1} &= \frac{1}{2}(E_k - X_k^{-*} E_k^* X_k^{-*}), &\quad& E_0 = E, \label{EupdateNewton0}
\end{alignat}
produces iterates $X_k$ and $E_k$ that converge to $\mathcal{P}(A)=U$ and $L_{\mathcal{P}}(A,E)$, respectively, as $k \rightarrow \infty$.

The fact that the matrix sign function will play a role in our study of Fr\'{e}chet derivatives of the polar decomposition should come as no surprise, given the sign function's intimate connection with the polar decomposition.  The sign function and polar decomposition are linked via the identity
\begin{equation} \label{signpolar}
\mathrm{sign}\begin{pmatrix} 0 & A \\ A^* & 0 \end{pmatrix} = \begin{pmatrix} 0 & \mathcal{P}(A) \\ \mathcal{P}(A)^* & 0 \end{pmatrix},
\end{equation}
which holds for any square nonsingular matrix $A$~\cite{higham2008functions}.  This identity has been used, among other things, to derive iterative schemes for computing the polar decomposition.  The essence of this approach is to write down an iterative scheme for computing $\mathrm{sign}\begin{pmatrix} 0 & A \\ A^* & 0 \end{pmatrix}$, check that its iterates retain the relevant block structure, and read off the $(1,2)$-block of the resulting algorithm.  In principle, one can adopt a similar strategy to derive iterative schemes for computing the Fr\'{e}chet derivatives of the polar decomposition.  Indeed, any iterative scheme that computes 
\[
\mathrm{sign}
\begin{pmatrix} 
0 & A & 0 & E \\ 
A^* & 0 & E^* & 0 \\ 
0 & 0 & 0 & A \\
0 & 0 & A^* & 0 
\end{pmatrix}
\]
while retaining its block structure will suffice, owing to the following observation.  By appealing to the definition~(\ref{Frechet}) of the Fr\'{e}chet derivative, the identity~(\ref{signpolar}) can be used to verify that
\begin{equation} \label{Lsign}
 L_\mathrm{sign} \left( \begin{pmatrix} 0 & A \\ A^* & 0 \end{pmatrix}, \begin{pmatrix} 0 & E \\ E^* & 0 \end{pmatrix} \right) = \begin{pmatrix} 0 & L_{\mathcal{P}}(A,E) \\ L_{\mathcal{P}}(A,E)^* & 0 \end{pmatrix}.
\end{equation}
Now since the sign function is a primary matrix function,~(\ref{mathias}),~(\ref{signpolar}), and~(\ref{Lsign}) imply that
\begin{align*}
\mathrm{sign}
\begin{pmatrix} 
0 & A & 0 & E \\ 
A^* & 0 & E^* & 0 \\ 
0 & 0 & 0 & A \\
0 & 0 & A^* & 0 
\end{pmatrix}
&= \begin{pmatrix} 
\mathrm{sign}\begin{pmatrix} 0 & A \\ A^* & 0 \end{pmatrix} & L_\mathrm{sign} \left( \begin{pmatrix} 0 & A \\ A^* & 0 \end{pmatrix}, \begin{pmatrix} 0 & E \\ E^* & 0 \end{pmatrix} \right) \\
0 & \mathrm{sign}\begin{pmatrix} 0 & A \\ A^* & 0 \end{pmatrix}
\end{pmatrix} \\
&= \begin{pmatrix} 
0 & \mathcal{P}(A) & 0 & L_{\mathcal{P}}(A,E) \\ 
\mathcal{P}(A)^* & 0 & L_{\mathcal{P}}(A,E)^* & 0 \\ 
0 & 0 & 0 & \mathcal{P}(A) \\
0 & 0 & \mathcal{P}(A)^* & 0 
\end{pmatrix}.
\end{align*}
A drawback of this approach is that it is valid only for square matrices $A$.  The strategy we adopt in the present paper will be quite different, and will be valid not just for square matrices $A$ but also for rectangular matrices $A$ having more rows than columns.

\paragraph{Organization} This paper is organized as follows.  We begin in Section~\ref{sec:results} by giving statements of our main results, deferring their proof to Section~\ref{sec:proofs}.  In Section~\ref{sec:practical}, we discuss several practical aspects of the iterative schemes, including stability, scaling, and termination criteria.  We compare the iterative schemes to other methods for computing the Fr\'{e}chet derivative of the polar decomposition in Section~\ref{sec:comparison}.  We finish with some numerical experiments in Section~\ref{sec:numerical}.

\section{Statement of Results} \label{sec:results}

In this section, we give a presentation of this paper's main result, which is a theorem that details a class of iterative schemes for computing the Fr\'{e}chet derivative $L_{\mathcal{P}}$ of the map $\mathcal{P}$ which sends a matrix $A$ to the unitary factor $U$ in its polar decomposition $A=UH$.  A proof of the theorem is given in Section~\ref{sec:proofs}.

The class of iterative schemes to be considered comprises schemes of the form~(\ref{Xupdate0}-\ref{Eupdate0}), with a mild constraint on the form of the function $g$.  To understand this constraint, it is helpful to develop some intuition concerning iterative schemes for computing the polar decomposition and their relationship to iterative schemes for computing the matrix sign function.  Fundamental to that intuition are the identities
\begin{equation} \label{signpolardef}
\mathrm{sign}(A) = A (A^2)^{-1/2}, \quad \mathcal{P}(A) = A (A^* A)^{-1/2},
\end{equation}
and the integral representation formulas~\cite[Equations 6.2 and 6.3]{higham1994matrix}
\[
\mathrm{sign}(A) = \frac{2}{\pi} A \int_0^\infty (t^2 I + A^2)^{-1} \, dt, \quad \mathcal{P}(A) = \frac{2}{\pi} A \int_0^\infty (t^2 I + A^* A)^{-1} \, dt,
\]
which hint at two rules of thumb.  First, iterative schemes for computing the matrix sign function tend to have the form $X_{k+1} = X_k h(X_k^2)$, where $h$ is a primary matrix function.  Second, to each iterative scheme $X_{k+1} = X_k h(X_k^2)$ for computing the matrix sign function, there corresponds an iterative scheme $X_{k+1} = X_k h(X_k^* X_k)$ for computing the polar decomposition.  The first of these rules of thumb appears to hold empirically to our knowledge.  The second is made precise in~\cite[Theorem 8.13]{higham2008functions}.  The theorem below extends~\cite[Theorem 8.13]{higham2008functions} by showing, in essence, that to each iterative scheme $X_{k+1} = X_k h(X_k^2)$ for computing the matrix sign function, there corresponds an iterative scheme for computing the polar decomposition \emph{and its Fr\'{e}chet derivative}.  This iterative scheme is given by~(\ref{Xupdate0}-\ref{Eupdate0}) with $g(X) = X h(X^* X)$.

In what follows, we denote by $\mathrm{skew}(B) = \frac{1}{2}(B-B^*)$ and $\mathrm{sym}(B) = \frac{1}{2}(B+B^*)$ the skew-Hermitian and Hermitian parts, respectively, of a square matrix $B$.  We denote the spectrum of $B$ by $\Lambda(B)$.

\begin{theorem} \label{thm:dpolariter}
Let $A \in \mathbb{C}^{m \times n}$ ($m \ge n$) be a full-rank matrix having polar decomposition $A=UH$, where $U \in \mathbb{C}^{m \times n}$ has orthonormal columns and $H \in \mathbb{C}^{n \times n}$ is Hermitian positive definite.  Let $E \in \mathbb{C}^{m \times n}$, and define $\Omega = \mathrm{skew}(U^* E)$ and $S = \mathrm{sym}(U^* E)$.  Let $h$ be a primary matrix function satisfying $h(Z^*)=h(Z)^*$ for every $Z$, and suppose that the iteration $Z_{k+1} = Z_k h(Z_k^2)$ produces iterates $Z_k$ that converge to $\mathrm{sign}(Z_0)$ as $k \rightarrow \infty$ when the initial condition is
\begin{equation} \label{Z0}
Z_0 = \begin{pmatrix} H & \Omega \\ 0 & -H \end{pmatrix},
\end{equation}
as well as when the initial condition is
\begin{equation} \label{Z0b}
Z_0 = \begin{pmatrix} H & S \\ 0 & H \end{pmatrix}.
\end{equation}
Assume that in both cases, $h$ is smooth on an open subset of $\mathbb{C}$ containing $\cup_{k=0}^\infty \Lambda(Z_k)$.
Let $g(X) = X h(X^* X)$.  Then the iteration
\begin{alignat}{3}
X_{k+1} &= g(X_k), &\quad& X_0 = A, \label{Xupdate2} \\
E_{k+1} &= L_g(X_k,E_k), &\quad& E_0 = E, \label{Eupdate2}
\end{alignat}
produces iterates $X_k$ and $E_k$ that converge to $\mathcal{P}(A)=U$ and $L_{\mathcal{P}}(A,E)$, respectively, as $k \rightarrow \infty$.
\end{theorem}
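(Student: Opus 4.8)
The plan is to exploit the block-matrix sign-function identity announced as Theorem~\ref{thm:signblockmat} to reduce the coupled polar iteration~(\ref{Xupdate2}-\ref{Eupdate2}) to a pair of scalar-function (sign-function) iterations, for which convergence is a hypothesis. First I would establish the block structure of the iterates. Writing $g(X)=Xh(X^*X)$, I would verify by induction that if $X_0=A=UH$ then each $X_k$ has the form $X_k=UP_k$ for some Hermitian positive definite $P_k \in \mathbb{C}^{n\times n}$ with $P_{k+1}=P_k h(P_k^2)$; this is exactly the content of~\cite[Theorem 8.13]{higham2008functions}, and it already gives $X_k\to U$ since $P_k\to\mathrm{sign}(H)=I$ by the assumed convergence of $Z_{k+1}=Z_kh(Z_k^2)$ with $Z_0=H$ (a diagonal block of~(\ref{Z0}) or~(\ref{Z0b})). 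Next I would differentiate $g$ to obtain a formula for $L_g(X,E)$, for instance $L_g(X,E)=Eh(X^*X)+XL_h(X^*X,E^*X+X^*E)$, and then compute $E_k$ along the iteration. The key claim is that $E_k=U\Xi_k + W_k$, where $\Xi_k\in\mathbb{C}^{n\times n}$ evolves by a linear recursion driven only by $P_k$ and the initial data $U^*E$, and $W_k$ is a term in the complement of the column space of $U$ that decays to zero. Tracking $U^*E_k$ and its skew/Hermitian parts separately, one should find that $\mathrm{skew}(U^*E_k)$ and $\mathrm{sym}(U^*E_k)$ evolve via precisely the linearizations of $Z_{k+1}=Z_kh(Z_k^2)$ about the block matrices~(\ref{Z0}) and~(\ref{Z0b}) respectively; this is where the two separate convergence hypotheses on $h$ get used.

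Concretely, I expect the mechanism to be the following. Theorem~\ref{thm:signblockmat} (the ``novel identity'' the abstract refers to) presumably states that $L_{\mathcal P}(A,E)$ is recoverable as an off-diagonal block of $\mathrm{sign}(X)$ for a block matrix $X$ built from $H$, $\Omega=\mathrm{skew}(U^*E)$, $S=\mathrm{sym}(U^*E)$, together with $U$. Because the sign function is a primary matrix function, the identity~(\ref{mathias}) applies to it, and an iteration $Z_{k+1}=Z_kh(Z_k^2)$ that maps block-triangular matrices to block-triangular matrices and converges to $\mathrm{sign}(Z_0)$ automatically yields a convergent coupled iteration for the relevant blocks. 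So the strategy is: (i) show $X_k$, $E_k$ assemble into a block matrix $\hat X_k$ on which $g$ acts as the sign iteration $\hat X_{k+1}=\hat X_k h(\hat X_k^*\hat X_k)$ restricted appropriately; (ii) invoke~\cite[Theorem 8.13]{higham2008functions} in its Fr\'echet-differentiated form, or equivalently re-run its proof with~(\ref{mathias}) inserted, to conclude $\hat X_k\to\mathrm{sign}(\hat X_0)$; (iii) read off from Theorem~\ref{thm:signblockmat} that the limiting blocks are exactly $U$ and $L_{\mathcal P}(A,E)$. The decomposition of $U^*E$ into $\Omega+S$ is what lets the single block matrix $\hat X_0$ decouple into the two initial conditions~(\ref{Z0}) and~(\ref{Z0b}): the skew part feeds the $\begin{pmatrix}H&\Omega\\0&-H\end{pmatrix}$ channel and the symmetric part feeds the $\begin{pmatrix}H&S\\0&H\end{pmatrix}$ channel, reflecting the fact that $L_{\mathcal P}(A,E)$ splits according to how the perturbation rotates versus stretches $U$.

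I would then handle the spectral/regularity bookkeeping: one must check that $h$ is smooth on a neighborhood of $\cup_k\Lambda(Z_k)$ along the \emph{genuine} sign iterates (not just the triangular-blocked ones), which is guaranteed by hypothesis, and that the spectra of the $2n\times 2n$ blocked iterates $\hat X_k$ coincide (as sets) with those of the $Z_k$ arising from~(\ref{Z0}) and~(\ref{Z0b}), so that~(\ref{mathias}) is legitimately applicable at every step. Finally I would address the component $W_k$ living outside $\mathrm{range}(U)$: since $X_k=UP_k$ stays in $\mathrm{range}(U)$ exactly, the differentiated iteration should keep $W_k$ confined to a fixed complementary subspace and contract it at the same rate that $P_k\to I$, so $W_k\to 0$; making this quantitative (e.g. via a uniform bound on $\|L_g(X_k,\cdot)\|$ near convergence, or by noting the block-triangular iterates converge and extracting the relevant block) completes the argument.

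The main obstacle I anticipate is step (i)--(ii): verifying that the differentiated map $E\mapsto L_g(X,E)$, when $g(X)=Xh(X^*X)$, genuinely corresponds — after the change of variables $E\leftrightarrow U^*E=\Omega+S$ and after assembling the block matrix — to the primary-function sign iteration acting on $\hat X$, \emph{and} that convergence transfers. The subtlety is that $\mathcal P$ is not a primary matrix function, so~(\ref{mathias}) cannot be applied to $\mathcal P$ directly (as the authors stress); the whole point is that it \emph{can} be applied after passing through $\mathrm{sign}$ of the auxiliary block matrix of Theorem~\ref{thm:signblockmat}. Getting the algebra of that reduction exactly right — in particular confirming the iteration $Z_{k+1}=Z_kh(Z_k^2)$ preserves the precise block-triangular-with-$\pm H$ structure of~(\ref{Z0}) and the block-triangular-with-$H$ structure of~(\ref{Z0b}), and that $h(Z^*)=h(Z)^*$ is what keeps the polar (as opposed to sign) iterates Hermitian-structured — is the technical heart of the proof.
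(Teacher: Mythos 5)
Your overall strategy matches the paper's: track $X_k = UP_k$ with $P_k$ Hermitian positive definite, separate $U^*E_k$ into skew-Hermitian and Hermitian parts, show these evolve via the off-diagonal blocks of the two sign iterations initialized at~(\ref{Z0}) and~(\ref{Z0b}), and invoke Theorem~\ref{thm:signblockmat} to read off the limits. The role you assign to $h(Z^*)=h(Z)^*$ in preserving the Hermitian/skew-Hermitian block structure is also exactly what the paper uses.

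However, there is a genuine error in your treatment of the component $W_k = E_k^\perp = (I-UU^*)E_k$ lying outside $\mathcal{R}(U)$. You assert that $W_k$ ``decays to zero'' and that the iteration ``contracts it at the same rate that $P_k \to I$.'' This is false when $m > n$. From Lemma~\ref{lemma:Lg}, since $\mathcal{R}(E_k^\perp) \subseteq \mathcal{N}(X_k^*)$, the perpendicular component obeys the \emph{decoupled} recursion $E_{k+1}^\perp = E_k^\perp h(X_k^* X_k)$, so $E_k^\perp = E^\perp B_k$ where $B_{k+1} = B_k h(X_k^* X_k)$, $B_0 = I$. Because $X_k = A B_k \to U$ and $U^*A = H$, one gets $H\lim_k B_k = I$, hence $B_k \to H^{-1}$ and
\[
E_k^\perp \to E^\perp H^{-1} = L_{\mathcal{P}}(A, E^\perp),
\]
which is generically nonzero. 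Intuitively, since $P_k \to I$, the multiplier $h(X_k^*X_k) \to h(I) = I$, so there is no contraction at all in the tail of the iteration; the perpendicular part settles to a nonzero limit rather than vanishing. Your proposed argument therefore proves only $E_k^\parallel \to L_{\mathcal{P}}(A, E^\parallel)$, and would yield $E_k \to UU^*L_{\mathcal{P}}(A,E)$, which is the correct limit only for square $A$ (where $UU^* = I$ forces $E^\perp = 0$). The rectangular case, which the theorem explicitly includes, requires identifying the nonzero limit $E^\perp H^{-1}$ of the perpendicular component and recognizing it as $L_{\mathcal{P}}(A,E^\perp)$ via~(\ref{Lgperp}) and~(\ref{dpolarperp}); this is the content of the paper's final lemma in Section~\ref{sec:thmproof}.
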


\paragraph{Remark} 
Taking $E=0$ in the preceding theorem, one recovers~\cite[Theorem 8.13]{higham2008functions}, up to the following modification: Instead of requesting that $h$ is a primary matrix function satisfying $h(Z^*)=h(Z)^*$,~\cite[Theorem 8.13]{higham2008functions} makes the weaker assumption that the function $\widetilde{g}(Z) = Z h(Z^2)$ satisfies $\widetilde{g}(Z^*) = \widetilde{g}(Z)^*$ for every $Z$.  It is easily checked using elementary properties of primary matrix functions~\cite[Theorem 1.13]{higham2008functions} that the latter is implied by the former.

Note that it is sometimes the case that the convergence of the matrix sign function iteration $Z_{k+1} = Z_k h(Z_k^2)$ referenced in Theorem~\ref{thm:dpolariter} is dictated by the spectrum of $Z_0$.  If this is the case, then the hypothesis that the iteration converges when $Z_0$ is given by~(\ref{Z0}) or~(\ref{Z0b}) is equivalent to the simpler hypothesis that the iteration converges when $Z_0=H$.  This follows from the fact that the eigenvalues of~(\ref{Z0}) or~(\ref{Z0b}) coincide with those of $H$.

Central to the proof of Theorem~\ref{thm:dpolariter} is an identity that relates the Fr\'{e}chet derivative of the polar decomposition to the sign of the block matrix $Z_0$ appearing in~(\ref{Z0}). 
We state the identity below to emphasize its importance.  A proof is given in Section~\ref{sec:identities}.
\begin{theorem} \label{thm:signblockmat}
Let $A$, $U$, $H$, $E$, and $\Omega$ be as in Theorem~\ref{thm:dpolariter}.  Then
\begin{align}
\mathrm{sign}\begin{pmatrix} H & \Omega \\ 0 & -H \end{pmatrix} &= \begin{pmatrix} I & U^* L_{\mathcal{P}}(A,E) \\ 0 & -I \end{pmatrix}. \label{signHW0} 
\end{align}
In particular, if $U^*E$ is skew-Hermitian, then
\begin{equation} \label{signblockmat2}
\mathrm{sign}\left( \begin{pmatrix} U^* & 0 \\ 0 & -U^* \end{pmatrix} \begin{pmatrix} A & E \\ 0 & A \end{pmatrix} \right) = \begin{pmatrix} U^* & 0 \\ 0 & -U^* \end{pmatrix}  \begin{pmatrix} \mathcal{P}(A) & L_{\mathcal{P}}(A,E) \\ 0 & \mathcal{P}(A) \end{pmatrix}.
\end{equation}
\end{theorem}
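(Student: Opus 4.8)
The plan is to establish~(\ref{signHW0}) by showing that both of its sides equal $\begin{pmatrix} I & K \\ 0 & -I\end{pmatrix}$, where $K := U^* L_{\mathcal{P}}(A,E)$. I would first extract from the differentiated polar decomposition a Sylvester equation that characterizes $K$ uniquely, and then evaluate the sign of the block triangular matrix $Z_0 := \begin{pmatrix} H & \Omega \\ 0 & -H\end{pmatrix}$ directly.

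First I would differentiate the polar decomposition at the full-rank matrix $A$, where $\mathcal{P}(A) = A(A^*A)^{-1/2}$ and the Hermitian-factor map $A \mapsto H = \mathcal{P}(A)^* A$ are both real-analytic. Writing $\dot U = L_{\mathcal{P}}(A,E)$, $K = U^*\dot U$, and $\dot H$ for the Fr\'echet derivative of $A \mapsto H$ in the direction $E$, differentiating $\mathcal{P}(A)^*\mathcal{P}(A) = I$ shows that $K$ is skew-Hermitian, while differentiating $H = \mathcal{P}(A)^*A$ gives $\dot H = K^* H + U^* E$. Since $H$ is Hermitian for every full-rank argument, $\dot H$ is Hermitian; taking skew-Hermitian parts and using $H^* = H$, $K^* = -K$ then yields the Sylvester equation $HK + KH = 2\Omega$. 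Because $H$ is positive definite, the map $K \mapsto HK + KH$ has spectrum contained in $(0,\infty)$ and is invertible, so $K$ is the \emph{unique} solution of this equation.

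Next I would compute $\mathrm{sign}(Z_0)$, which is well defined since $\Lambda(Z_0) = \Lambda(H) \cup \Lambda(-H)$ lies off the imaginary axis. Looking for $W$ with $\begin{pmatrix} I & W \\ 0 & I\end{pmatrix}^{-1} Z_0 \begin{pmatrix} I & W \\ 0 & I\end{pmatrix} = \begin{pmatrix} H & 0 \\ 0 & -H\end{pmatrix}$, a direct block multiplication shows this holds iff $HW + WH = -\Omega$, whose unique solution is $W = -\tfrac12 K$. Since the sign function is a primary matrix function it respects this similarity and maps $\begin{pmatrix} H & 0 \\ 0 & -H\end{pmatrix}$ to $\begin{pmatrix} I & 0 \\ 0 & -I\end{pmatrix}$, so conjugating back gives $\mathrm{sign}(Z_0) = \begin{pmatrix} I & -2W \\ 0 & -I\end{pmatrix} = \begin{pmatrix} I & K \\ 0 & -I\end{pmatrix}$, which is~(\ref{signHW0}); alternatively one can compute $Z_0^2 = \begin{pmatrix} H^2 & H\Omega - \Omega H \\ 0 & H^2\end{pmatrix}$, note that its principal square root is block upper triangular with diagonal blocks $H$, and read off $\mathrm{sign}(Z_0) = Z_0(Z_0^2)^{-1/2}$. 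Finally, for~(\ref{signblockmat2}): when $U^*E$ is skew-Hermitian, $S = 0$ and $\Omega = U^*E$, so using $U^*A = U^*UH = H$ one has $\begin{pmatrix} U^* & 0 \\ 0 & -U^*\end{pmatrix}\begin{pmatrix} A & E \\ 0 & A\end{pmatrix} = Z_0$ and $\begin{pmatrix} U^* & 0 \\ 0 & -U^*\end{pmatrix}\begin{pmatrix} \mathcal{P}(A) & L_{\mathcal{P}}(A,E) \\ 0 & \mathcal{P}(A)\end{pmatrix} = \begin{pmatrix} I & K \\ 0 & -I\end{pmatrix}$, and the claim follows by comparing with~(\ref{signHW0}).

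I expect the main obstacle to be the first step rather than the sign calculation: one must justify that $\mathcal{P}$, hence $A \mapsto H$, is Fr\'echet differentiable at a full-rank $A$ and that the defining relations may legitimately be differentiated --- with due care in the rectangular case $m > n$, where $\dot U$ need not lie in the column space of $U$, so that $K = U^*\dot U$ is only a ``projection'' of $\dot U$ --- and one must confirm that $HK + KH = 2\Omega$ pins down $K$ uniquely, which is exactly where positive-definiteness of $H$ enters. The remaining block-matrix manipulations are routine.
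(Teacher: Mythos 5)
Your argument is correct, and it is exactly the ``Lyapunov equation'' route that the paper acknowledges as an alternative proof of~(\ref{signHW0}) and sketches at the start of Section~\ref{sec:comparison}: differentiate $\mathrm{skew}\bigl(\mathcal{P}(A)^*A\bigr)=0$ to get $HK+KH=2\Omega$ with $K=U^*L_{\mathcal{P}}(A,E)$, then read the solution off the $(1,2)$-block of the sign via the similarity $\begin{pmatrix} I & W \\ 0 & I\end{pmatrix}$. The paper's \emph{main} proof proceeds differently: it first uses Lemma~\ref{lemma:dpolar} to reduce to $U^*L_{\mathcal{P}}(A,E)=L_{\mathcal{P}}(H,\Omega)$, then in Lemma~\ref{lemma:signsym} computes $\mathrm{sign}\begin{pmatrix} H & \Omega \\ 0 & -H\end{pmatrix}$ directly from the formula $\mathrm{sign}(X)=X(X^2)^{-1/2}$ combined with the Mathias block identity~(\ref{mathias}) for the primary function $x^{-1/2}$, and finally identifies the resulting $(1,2)$-block as $L_{\mathcal{P}}(H,\Omega)$ via~(\ref{Lg}). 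Both routes work; yours is a bit more structural (it makes the Sylvester-operator invertibility, i.e.\ positive-definiteness of $H$, do the uniqueness work up front and handles rectangular $A$ in one stroke since $H=\mathcal{P}(A)^*A$ holds for $m\ge n$), while the paper's Lemma~\ref{lemma:signsym} stays closer to the raw definition of the sign function and sets up the block-Mathias machinery it reuses in Lemma~\ref{lemma:YkOmegak}. One small point worth making explicit in a final write-up: you should state that $\dot H$ is Hermitian because $A\mapsto \mathcal{P}(A)^*A$ takes values in the (real) linear subspace of Hermitian matrices for every full-rank $A$, so its derivative does too; you gesture at this but it is the hinge on which the Sylvester equation turns.
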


In addition to being useful in the proof of Theorem~\ref{thm:dpolariter}, the identity~(\ref{signblockmat2}) bears an interesting resemblance to~(\ref{mathias}).

Theorem~\ref{thm:dpolariter} has several corollaries, each corresponding to a different choice of iterative scheme $Z_{k+1} = Z_k h(Z_k^2)$ for computing the matrix sign function.  The simplest is the well-known Newton iteration 
\begin{equation} \label{signNewton}
Z_{k+1} = \frac{1}{2}(Z_k + Z_k^{-1}),
\end{equation} 
which corresponds to the choice $h(Z) = \frac{1}{2}(I + Z^{-1})$.  It is known that this iteration converges quadratically to $\mathrm{sign}(Z_0)$ for any $Z_0$ having no pure imaginary eigenvalues~\cite[Theorem 5.6]{higham2008functions}.  
Since~(\ref{Z0}) and~(\ref{Z0b}) have eigenvalues equal to plus or minus the eigenvalues of $H$, all of which are positive real numbers, we obtain the following corollary.  In it, we restrict the discussion to square matrices, since this leads to a particularly simple iterative scheme.
\begin{corollary} \label{cor:Newton}
Let $A \in \mathbb{C}^{n \times n}$ be a nonsingular matrix having polar decomposition $A=UH$, where $U \in \mathbb{C}^{n \times n}$ is unitary and $H \in \mathbb{C}^{n \times n}$ is Hermitian positive definite.  Let $E \in \mathbb{C}^{n \times n}$.  Then the iteration
\begin{alignat}{3}
X_{k+1} &= \frac{1}{2}(X_k + X_k^{-*}), &\quad& X_0 = A, \label{XupdateNewton} \\
E_{k+1} &= \frac{1}{2}(E_k - X_k^{-*} E_k^* X_k^{-*}), &\quad& E_0 = E, \label{EupdateNewton}
\end{alignat}
produces iterates $X_k$ and $E_k$ that converge to $\mathcal{P}(A)=U$ and $L_{\mathcal{P}}(A,E)$, respectively, as $k \rightarrow \infty$.
\end{corollary}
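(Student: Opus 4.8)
The plan is to obtain Corollary~\ref{cor:Newton} as a direct specialization of Theorem~\ref{thm:dpolariter} with $h(Z) = \frac{1}{2}(I + Z^{-1})$, so that $g(X) = X h(X^* X) = \frac{1}{2}\bigl(X + X (X^* X)^{-1}\bigr) = \frac{1}{2}(X + X^{-*})$ for nonsingular square $X$; this is exactly the scalar-to-matrix sign iteration~(\ref{signNewton}) transported to the polar setting, and it recovers~(\ref{XupdateNewton}). The substantive work is therefore twofold: first, to verify that the hypotheses of Theorem~\ref{thm:dpolariter} are satisfied for this choice of $h$; and second, to compute $L_g(X,E)$ explicitly and check that it simplifies to the stated update~(\ref{EupdateNewton}).

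\emph{Checking the hypotheses.} The function $h(Z) = \frac{1}{2}(I + Z^{-1})$ is a primary matrix function, defined and smooth on any open set avoiding $0$, and it satisfies $h(Z^*) = \frac{1}{2}(I + (Z^*)^{-1}) = \frac{1}{2}(I + Z^{-1})^* = h(Z)^*$. The induced sign iteration is $Z_{k+1} = Z_k h(Z_k^2) = \frac{1}{2}(Z_k + Z_k^{-1})$, the classical Newton iteration for $\mathrm{sign}$. By~\cite[Theorem 5.6]{higham2008functions} this converges quadratically to $\mathrm{sign}(Z_0)$ whenever $Z_0$ has no pure imaginary eigenvalues. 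Here $A$ is nonsingular and square, so $H$ is Hermitian positive definite; the block matrices~(\ref{Z0}) and~(\ref{Z0b}) are block triangular with diagonal blocks $\pm H$ and $H$ respectively, hence their eigenvalues are $\pm\lambda$ (resp.\ $\lambda$) for $\lambda \in \Lambda(H) \subset (0,\infty)$ — in particular all real and nonzero, so none are pure imaginary and convergence holds for both. It remains to observe that all iterates $Z_k$ stay in a fixed open set on which $h$ is smooth, i.e.\ that no $Z_k$ becomes singular: since $Z_0$ has only real eigenvalues, an easy induction shows each $Z_k$ has only real eigenvalues (Newton's iteration acts on the eigenvalues by $\lambda \mapsto \frac{1}{2}(\lambda + \lambda^{-1})$, which maps nonzero reals to nonzero reals), so $0 \notin \Lambda(Z_k)$ for all $k$; choosing the open set $\mathbb{C}\setminus\{0\}$ (or, if one wants $\Lambda(Z_k)$ to lie in a compact subset, a neighborhood of $[\,\min_k\min\Lambda(Z_k),\,\max_k\max\Lambda(Z_k)\,]$ away from $0$) completes the verification. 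Thus Theorem~\ref{thm:dpolariter} applies and $X_k \to U$, $E_k \to L_{\mathcal{P}}(A,E)$ with $E_{k+1} = L_g(X_k, E_k)$.

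\emph{Computing $L_g$.} Write $g(X) = \frac{1}{2}(X + \varphi(X))$ where $\varphi(X) = X^{-*} = (X^*)^{-1} = \overline{(X^{-1})} \cdot {}$(more precisely $\varphi(X) = (X^*)^{-1}$). The Fr\'{e}chet derivative of $X \mapsto X^{-1}$ in direction $F$ is $-X^{-1} F X^{-1}$; composing with the (real-)linear map $X \mapsto X^*$, which sends the direction $E$ to $E^*$, gives $L_\varphi(X,E) = -(X^*)^{-1} E^* (X^*)^{-1} = -X^{-*} E^* X^{-*}$. Hence
\[
L_g(X,E) = \tfrac{1}{2}\bigl(E - X^{-*} E^* X^{-*}\bigr),
\]
which is precisely~(\ref{EupdateNewton}) with $X = X_k$, $E = E_k$. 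One small point worth a sentence: because conjugation is only real-linear, $g$ is a real-Fr\'{e}chet-differentiable map on the relevant (real) manifold of nonsingular matrices, and the derivation of the coupled iteration~(\ref{Xupdate0}-\ref{Eupdate0}) in Theorem~\ref{thm:dpolariter} is to be read in this real sense (which is consistent, since $L_{\mathcal{P}}(A,\cdot)$ is itself only real-linear in general).

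\emph{Main obstacle.} No single step is deep; the corollary is genuinely a corollary. The only place requiring a modicum of care is confirming the regularity/smoothness hypothesis of Theorem~\ref{thm:dpolariter} — namely that $h$ is smooth on an open set containing $\cup_{k=0}^\infty \Lambda(Z_k)$ — which amounts to the observation that the Newton sign iterates starting from a matrix with real spectrum never hit a singular matrix. Everything else is the routine product-rule computation of $L_g$ and the bookkeeping that $g(X) = \frac{1}{2}(X + X^{-*})$ for the chosen $h$.
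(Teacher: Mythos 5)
Your proposal is correct and follows the same route as the paper: choose $h(Z)=\frac{1}{2}(I+Z^{-1})$, observe that $g(X)=Xh(X^*X)$ simplifies to $\frac{1}{2}(X+X^{-*})$ for square nonsingular $X$, verify the hypotheses of Theorem~\ref{thm:dpolariter} (the block matrices~(\ref{Z0}) and~(\ref{Z0b}) have real nonzero spectrum, so the Newton sign iteration converges and $h$ stays smooth on a neighborhood of the iterates' spectra), and compute $L_g$ via the product/chain rule to recover~(\ref{EupdateNewton}). You spell out a few details the paper leaves implicit — the explicit $L_g$ computation, the non-singularity of the $Z_k$, and the remark that the derivative of $X\mapsto X^{-*}$ is only real-linear — but these are just fleshing out the same argument, not a different one.
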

\paragraph{Remark} 
If $A$ is rectangular, then the iteration obtained from~(\ref{signNewton}) reads
\begin{alignat}{3}
X_{k+1} &= \frac{1}{2}X_k(I + (X_k^* X_k)^{-1}), &\quad& X_0 = A, \label{XupdateNewtonrect}\\
E_{k+1} &= \frac{1}{2}\Big[E_k(I + (X_k^* X_k)^{-1}) &&\label{EupdateNewtonrect}\\ &\hspace{0.3in}- X_k  (X_k^* X_k)^{-1} (E_k^* X_k + X_k^* E_k)  (X_k^* X_k)^{-1}\Big], &\quad& E_0 = E.  \nonumber
\end{alignat}
This scheme simplifies to~(\ref{XupdateNewton}-\ref{EupdateNewton}) when $A$ is square.

A second corollary of Theorem~\ref{thm:dpolariter} is obtained by considering the Newton-Schulz iteration
\begin{equation} \label{NewtonSchulz}
Z_{k+1} = \frac{1}{2}Z_k(3I-Z_k^2),
\end{equation}
which corresponds to the choice $h(Z) = 3I-Z$.  It is known that this iteration converges to $\mathrm{sign}(Z_0)$ provided that (i)~$Z_0$ has no pure imaginary eigenvalues and (ii)~the eigenvalues of $I-Z_0^2$ all have magnitude strictly less than one~\cite[Theorem 5.2]{kenney1991rational}.  Note that~\cite[Theorem 5.2]{kenney1991rational} replaces the latter condition with $\|I-Z_0^2\|<1$, but it is evident from their proof that this condition can be relaxed to what we have written here.  Since the eigenvalues of
\[
\begin{pmatrix} I & 0 \\ 0 & I \end{pmatrix} - \begin{pmatrix} H & \Omega \\ 0 & -H \end{pmatrix}^2 = \begin{pmatrix} I - H^2 & \Omega H - H \Omega \\ 0 & I- H^2 \end{pmatrix}
\]
and
\[
\begin{pmatrix} I & 0 \\ 0 & I \end{pmatrix} - \begin{pmatrix} H & S \\ 0 & H \end{pmatrix}^2 = \begin{pmatrix} I - H^2 & -H S - S H \\ 0 & I- H^2 \end{pmatrix}
\]
coincide with those of $I-H^2 = I-A^* A$, we obtain the following corollary.
\begin{corollary} \label{cor:NewtonSchulz}
Let $A \in \mathbb{C}^{m \times n}$ ($m \ge n$) be a full-rank matrix having polar decomposition $A=UH$, where $U \in \mathbb{C}^{m \times n}$ has orthonormal columns and $H \in \mathbb{C}^{n \times n}$ is Hermitian positive definite.  Let $E \in \mathbb{C}^{m \times n}$.  If all of the singular values of $A$ lie in the interval $(0,\sqrt{2})$, then the iteration
\begin{alignat}{3}
X_{k+1} &= \frac{1}{2}X_k(3I - X_k^* X_k), &\quad& X_0 = A, \label{XupdateNewtonSchulz} \\
E_{k+1} &= \frac{1}{2} E_k(3I - X_k^* X_k) - \frac{1}{2} X_k (E_k^* X_k + X_k^* E_k) , &\quad& E_0 = E, \label{EupdateNewtonSchulz}
\end{alignat}
produces iterates $X_k$ and $E_k$ that converge to $\mathcal{P}(A)=U$ and $L_{\mathcal{P}}(A,E)$, respectively, as $k \rightarrow \infty$.
\end{corollary}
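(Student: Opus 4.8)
\textbf{Proof proposal (Corollary~\ref{cor:NewtonSchulz}).}

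The plan is to obtain Corollary~\ref{cor:NewtonSchulz} as a direct application of Theorem~\ref{thm:dpolariter}, taking for $h$ the (affine) polynomial underlying the Newton--Schulz iteration~(\ref{NewtonSchulz}), so that $g(X) = Xh(X^*X) = \frac{1}{2}X(3I-X^*X)$. First I would dispatch the structural hypotheses on $h$ demanded by Theorem~\ref{thm:dpolariter}: being a polynomial, $h$ is a primary matrix function that is entire, hence smooth on any neighborhood of $\cup_{k=0}^\infty\Lambda(Z_k)$ in each of the two cases considered below, and, having real coefficients, it satisfies $h(Z^*)=h(Z)^*$ for every $Z$.

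The only hypothesis of Theorem~\ref{thm:dpolariter} that requires genuine work is the convergence of the sign iteration $Z_{k+1}=Z_kh(Z_k^2)=\frac{1}{2}Z_k(3I-Z_k^2)$ for the two initial conditions~(\ref{Z0}) and~(\ref{Z0b}). Here I would invoke the convergence result quoted just before the corollary, namely the relaxed form of~\cite[Theorem 5.2]{kenney1991rational}: the iteration converges to $\mathrm{sign}(Z_0)$ whenever $Z_0$ has no pure imaginary eigenvalues and every eigenvalue of $I-Z_0^2$ has magnitude strictly less than one. The spectral data required are exactly those displayed just above the corollary: for either choice of $Z_0$, the matrix $I-Z_0^2$ is block upper triangular with both diagonal blocks equal to $I-H^2$, whence $\Lambda(I-Z_0^2)=\Lambda(I-H^2)$, and block triangularity also gives $\Lambda(Z_0)=\Lambda(H)\cup\Lambda(-H)$ for~(\ref{Z0}) and $\Lambda(Z_0)=\Lambda(H)$ for~(\ref{Z0b}). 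It then remains to connect this to the singular values of $A$: since $A=UH$ with $U^*U=I$ and $H=H^*$, we have $A^*A=H^2$, so the singular values $\sigma_1,\dots,\sigma_n$ of $A$ are precisely the (positive real) eigenvalues of $H$. The hypothesis $\sigma_i\in(0,\sqrt{2})$ then gives $\Lambda(Z_0)\subset\mathbb{R}\setminus\{0\}$ and $\Lambda(I-H^2)=\{1-\sigma_i^2\}\subset(-1,1)$, so both conditions hold and the sign iteration converges for both initial conditions.

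With every hypothesis of Theorem~\ref{thm:dpolariter} verified, the corollary follows once the coupled iteration~(\ref{Xupdate2}-\ref{Eupdate2}) is written out for this $g$. The $X$-update reads exactly~(\ref{XupdateNewtonSchulz}), while for the $E$-update I would compute the Fr\'{e}chet derivative of $g(X)=\frac{3}{2}X-\frac{1}{2}XX^*X$ in the direction $E$, obtaining $L_g(X,E)=\frac{3}{2}E-\frac{1}{2}\bigl(EX^*X+XE^*X+XX^*E\bigr)=\frac{1}{2}E(3I-X^*X)-\frac{1}{2}X(E^*X+X^*E)$, which is precisely~(\ref{EupdateNewtonSchulz}). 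Substituting $(X_k,E_k)$ and invoking the conclusion of Theorem~\ref{thm:dpolariter} finishes the proof. No step is genuinely hard here, since the eigenvalue computations are already recorded in the text and everything else is routine; the only point needing care is the appeal to the \emph{relaxed} convergence criterion for Newton--Schulz --- confirming, as the authors note, that the hypothesis $\|I-Z_0^2\|<1$ in~\cite[Theorem 5.2]{kenney1991rational} may be weakened to the stated spectral condition on $I-Z_0^2$ --- together with the elementary but essential fact that the singular values of $A$ coincide with the eigenvalues of $H$.
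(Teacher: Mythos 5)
Your proposal is correct and follows essentially the same route as the paper: invoke Theorem~\ref{thm:dpolariter} with the polynomial $h$ underlying Newton--Schulz, verify convergence of the sign iteration for both initial conditions via the relaxed form of the Kenney--Laub criterion, and identify the spectrum of $I-Z_0^2$ with that of $I-H^2=I-A^*A$ using the block-triangular structure. The only additions beyond the paper's text are your explicit verification of the structural hypotheses on $h$ and the computation of $L_g$, both of which the paper leaves implicit as routine.
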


\paragraph{Remark} A more direct analysis of~(\ref{XupdateNewtonSchulz}), without appealing to its relationship to a matrix sign function iteration, shows that $X_k \rightarrow U$ under the less stringent requirement that all of the singular values of $A$ lie in the interval $(0,\sqrt{3})$~\cite[Problem 8.20]{higham2008functions}.  Our numerical experiments suggest that the coupled iteration~(\ref{XupdateNewtonSchulz}-\ref{EupdateNewtonSchulz}) enjoys convergence under the same condition, but Theorem~\ref{thm:dpolariter} alone appears inadequate to conclude such a claim.

Other corollaries to Theorem~\ref{thm:dpolariter} can be derived in a similar fashion.  For instance, iterative schemes based on Pad\'{e} approximations of $\mathrm{sign}(Z) = Z(I-(I-Z^2))^{-1/2}$ (of which~(\ref{NewtonSchulz}) is a special case) can be used; see~\cite[Chapter 5.4]{higham2008functions} for further details.

\section{Proofs} \label{sec:proofs}

In this section, we present proofs of Theorems~\ref{thm:dpolariter} and~\ref{thm:signblockmat}.  Our presentation is divided into two parts.  First, in Section~\ref{sec:identities}, we derive a few identities involving the Fr\'{e}chet derivative of the polar decomposition, proving Theorem~\ref{thm:signblockmat} in the process.  Then, in Section~\ref{sec:thmproof}, we use the aforementioned identites to prove convergence of the iteration~(\ref{Xupdate2}-\ref{Eupdate2}), thereby proving Theorem~\ref{thm:dpolariter}.

\subsection{Identities Involving the Fr\'{e}chet Derivative of the Polar Decomposition} \label{sec:identities}

This section studies the Fr\'{e}chet derivative of the polar decomposition and its relationship to the matrix sign function, culminating in a proof of Theorem~\ref{thm:signblockmat}.
A couple of main observations will be made.
First, as will be seen in Lemma~\ref{lemma:dpolar}, the task of evaluating $L_{\mathcal{P}}(A,E)$ can essentially be reduced to the case in which $A$ is Hermitian positive definite and $E$ is skew-Hermitian.  This is relatively simple to show when $A$ is square, but the rectangular case turns out to be more subtle, requiring some that some attention be paid to the relationship between the column space of $A$ and that of $E$.  This observation will be followed with a proof of Theorem~\ref{thm:signblockmat}, which reveals that the value of $U^* L_{\mathcal{P}}(A,E)$ can be read off of the $(1,2)$-block  of the matrix sign function applied to a certain block matrix.

Before studying the derivatives of $\mathcal{P}$ in detail, it is worth pointing out that $\mathcal{P}$ is a smooth map from the set of full-rank $m \times n$ ($m \ge n$) matrices to the set of  $m \times n$ matrices with orthonormal columns.  This follows from two facts: (1) the latter set of matrices constitutes a smooth, compact manifold, the Stiefel manifold $V_n(\mathbb{C}^m) = \{U \in \mathbb{C}^{m \times n} \mid U^*U = I\}$, and (2) the map $\mathcal{P}$ coincides with the closest point projection onto $V_n(\mathbb{C}^n)$.  That is, in the Frobenius norm $\|\cdot\|_F$,
\[
\mathcal{P}(A) = \argmin_{U \in V_n(\mathbb{C}^m)} \|A-U\|_F
\]
for any full-rank $A \in \mathbb{C}^{m \times n}$~\cite[Theorem 8.4]{higham2008functions}.  It is a classical result from differential geometry that the closest point projection onto a smooth, compact manifold embedded in Euclidean space is a smooth map~\cite{foote1984regularity}.
In particular, $\mathcal{P}$ is Fr\'{e}chet differentiable at any full-rank $A \in \mathbb{C}^{m \times n}$.  (For a different justification of this fact, see~\cite[Section 2.3(c)]{dieci1999smooth}.)

We now turn our attention to the differentiation of $\mathcal{P}$. We begin by recording a useful formula for the Fr\'{e}chet derivative of a function of the form $g(X) = X h(X^* X)$.  Along the way, we make some observations concerning the column space $\mathcal{R}(A)$ of a matrix $A \in \mathbb{C}^{m \times n}$ and the column space $\mathcal{R}(L_g(A,E))$ of the Fr\'{e}chet derivative $L_g(A,E)$ of $g$ at $A$ in a direction $E \in \mathbb{C}^{m \times n}$.  We denote by $\mathcal{N}(A^*)$ the null space of $A^*$; equivalently, $\mathcal{N}(A^*)$ is the orthogonal complement to $\mathcal{R}(A)$ in $\mathbb{C}^m$.

\begin{lemma} \label{lemma:Lg}
Let $A \in \mathbb{C}^{m \times n}$ ($m \ge n$), let $h : \mathbb{C}^{n \times n} \rightarrow \mathbb{C}^{n \times n}$ be Fr\'{e}chet differentiable at $A^* A$, and define $g(X) = X h(X^* X)$.  Then for any $E \in \mathbb{C}^{m \times n}$,
\begin{equation} \label{Lg}
L_g(A,E) = E h(A^* A) + A L_h(A^* A, A^*E + E^* A).
\end{equation}
In particular, if  $\mathcal{R}(E) \subseteq \mathcal{R}(A)$, then $\mathcal{R}(L_g(A,E)) \subseteq \mathcal{R}(A)$.  On the other hand, if $\mathcal{R}(E) \subseteq \mathcal{N}(A^*)$, then
\begin{equation} \label{Lgperp}
L_g(A,E) = E h(A^* A),
\end{equation}
and hence $\mathcal{R}(L_g(A,E)) \subseteq \mathcal{N}(A^*)$.
\end{lemma}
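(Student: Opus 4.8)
The plan is to differentiate $g(X) = X h(X^* X)$ directly using the product rule for Fr\'{e}chet derivatives, together with the chain rule applied to the inner map $X \mapsto X^* X$. First I would observe that the map $X \mapsto X^* X$ has Fr\'{e}chet derivative at $A$ in the direction $E$ given by $A^* E + E^* A$; this is an elementary computation since $(A+E)^*(A+E) - A^*A = A^*E + E^*A + E^*E$ and the last term is $o(\|E\|)$. Composing with $h$, which is assumed Fr\'{e}chet differentiable at $A^*A$, the chain rule gives that $X \mapsto h(X^* X)$ has derivative $L_h(A^*A, A^*E + E^*A)$ at $A$ in direction $E$. Finally, the product rule applied to the bilinear multiplication $(X, Y) \mapsto XY$ evaluated at $(A, h(A^*A))$ yields
\[
L_g(A,E) = E\, h(A^*A) + A\, L_h(A^*A,\, A^*E + E^*A),
\]
which is~(\ref{Lg}). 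I would keep this part brief, citing the standard rules for Fr\'{e}chet derivatives; the only mild care needed is to confirm that $h(X^*X)$ is indeed differentiable at $A$, which follows from the chain rule since $X \mapsto X^*X$ is smooth (in fact quadratic) and $h$ is differentiable at $A^*A$.

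For the two range statements, I would argue from the explicit formula~(\ref{Lg}). If $\mathcal{R}(E) \subseteq \mathcal{R}(A)$, then the first term $E\, h(A^*A)$ has range contained in $\mathcal{R}(E) \subseteq \mathcal{R}(A)$, and the second term $A\, L_h(A^*A, A^*E + E^*A)$ has range contained in $\mathcal{R}(A)$ trivially, being a product with $A$ on the left; hence $\mathcal{R}(L_g(A,E)) \subseteq \mathcal{R}(A)$. For the complementary case $\mathcal{R}(E) \subseteq \mathcal{N}(A^*)$, the key point is that $A^* E = 0$, so the argument of $L_h$ becomes $E^* A$. But then $L_h(A^*A, E^*A)$ — wait, one must be slightly careful here: the directional increment passed to $L_h$ is $A^*E + E^*A = E^*A$, which is generally nonzero. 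So the second term $A\, L_h(A^*A, E^*A)$ need not vanish.

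This is the step I expect to require the most thought, and I would resolve it as follows. Since $h$ is a primary matrix function (this is the standing hypothesis in Theorem~\ref{thm:dpolariter}, and the lemma is used in that context), $L_h(A^*A, \cdot)$ maps Hermitian arguments to Hermitian matrices, but more to the point, I need to understand $A^* E + E^* A$ when $A^*E = 0$. Note $E^*A = (A^*E)^* = 0$ as well! So in fact $\mathcal{R}(E) \subseteq \mathcal{N}(A^*)$ forces both $A^*E = 0$ and $E^*A = 0$, hence $A^*E + E^*A = 0$, and therefore $L_h(A^*A, 0) = 0$ by linearity of $L_h$ in its second argument. Thus the second term in~(\ref{Lg}) drops out entirely and $L_g(A,E) = E\, h(A^*A)$, which is~(\ref{Lgperp}). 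The range of this is contained in $\mathcal{R}(E) \subseteq \mathcal{N}(A^*)$, completing the proof. In retrospect the only genuine subtlety is noticing that the condition $\mathcal{R}(E)\subseteq\mathcal{N}(A^*)$ kills \emph{both} $A^*E$ and its adjoint $E^*A$; once that is seen, everything else is the routine differentiation calculus above. I would also remark that the first statement does not need $h$ to be primary — only the formula~(\ref{Lg}) and the containment $\mathcal{R}(E)\subseteq\mathcal{R}(A)$ — whereas the general formula~(\ref{Lg}) itself needs only Fr\'{e}chet differentiability of $h$ at $A^*A$, exactly as hypothesized.
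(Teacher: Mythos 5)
Your proposal is correct and takes essentially the same approach as the paper: product and chain rule for~(\ref{Lg}), range inclusion read off term-by-term from the formula, and the observation that $\mathcal{R}(E)\subseteq\mathcal{N}(A^*)$ kills both $A^*E$ and its adjoint $E^*A$, so $L_h(A^*A,0)=0$ by linearity. The momentary worry you flag midway (that $E^*A$ might survive) is resolved exactly as the paper intends, and you are also right that primariness of $h$ plays no role here.
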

\begin{proof}
The formula~(\ref{Lg}) is a consequence of the product rule and the chain rule~\cite[Theorems 3.3 \& 3.4]{higham2008functions}.  The implication $\mathcal{R}(E) \subseteq \mathcal{R}(A) \implies \mathcal{R}(L_g(A,E)) \subseteq \mathcal{R}(A)$ is immediate since the columns of $L_g(A,E)$ are linear combinations of the columns of $A$ and $E$.  Equation~(\ref{Lgperp}) follows from the fact that $A^*E+E^*A = 0$ whenever $\mathcal{R}(E) \subseteq \mathcal{N}(A^*)$.
\end{proof}

The preceding lemma has several important consequences.  The first of these is an application of Lemma~\ref{lemma:Lg} to the function $g(X) = \mathcal{P}(X)$, which has the requisite functional form in view of~(\ref{signpolardef}).

\begin{lemma} \label{lemma:Lfo}
Let $A \in \mathbb{C}^{m \times n}$ ($m \ge n$) be a full-rank matrix having polar decomposition $A=UH$, where $U = \mathcal{P}(A) \in \mathbb{C}^{m \times n}$ has orthonormal columns and $H \in \mathbb{C}^{n \times n}$ is Hermitian positive definite.  Let $E \in \mathbb{C}^{m \times n}$, and write
\[
E = E^\parallel + E^\perp, \quad E^\parallel = UU^* E, \quad  E^\perp = (I-UU^*) E.
\]
Then
\begin{equation} \label{dpolarpar}
U U^* L_{\mathcal{P}}(A,E^\parallel) = L_{\mathcal{P}}(A,E^\parallel)
\end{equation}
and
\begin{equation} \label{dpolarperp}
L_{\mathcal{P}}(A,E^\perp) = E^\perp H^{-1}.
\end{equation}
\end{lemma}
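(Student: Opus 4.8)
The statement to be proved is Lemma~\ref{lemma:Lfo}: that $L_{\mathcal{P}}(A,E^\parallel)$ has column space contained in $\mathcal{R}(U)=\mathcal{R}(A)$, and that $L_{\mathcal{P}}(A,E^\perp)=E^\perp H^{-1}$. The plan is to apply Lemma~\ref{lemma:Lg} with $g=\mathcal{P}$, using the representation $\mathcal{P}(X)=X(X^*X)^{-1/2}$ from~(\ref{signpolardef}), so that $h(Z)=Z^{-1/2}$, which is Fr\'echet differentiable at $A^*A=H^2$ since $H^2$ is Hermitian positive definite.

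First I would dispatch~(\ref{dpolarpar}). Since $E^\parallel=UU^*E$, we have $\mathcal{R}(E^\parallel)\subseteq\mathcal{R}(U)=\mathcal{R}(A)$, the last equality because $A=UH$ with $H$ invertible. By the first implication in Lemma~\ref{lemma:Lg}, $\mathcal{R}(L_{\mathcal{P}}(A,E^\parallel))\subseteq\mathcal{R}(A)=\mathcal{R}(U)$; since $UU^*$ is the orthogonal projector onto $\mathcal{R}(U)$, this gives $UU^*L_{\mathcal{P}}(A,E^\parallel)=L_{\mathcal{P}}(A,E^\parallel)$, which is exactly~(\ref{dpolarpar}).

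Next, for~(\ref{dpolarperp}): $E^\perp=(I-UU^*)E$ satisfies $\mathcal{R}(E^\perp)\subseteq\mathcal{N}(U^*)=\mathcal{N}(A^*)$ (again using $A=UH$ with $H$ invertible, so $A^*=HU^*$ and $\mathcal{N}(A^*)=\mathcal{N}(U^*)$). The second part of Lemma~\ref{lemma:Lg}, equation~(\ref{Lgperp}), then yields $L_{\mathcal{P}}(A,E^\perp)=E^\perp h(A^*A)=E^\perp (A^*A)^{-1/2}=E^\perp(H^2)^{-1/2}=E^\perp H^{-1}$, where the last step uses that $H$ is Hermitian positive definite so its principal square root is $H$ itself and $(H^2)^{-1/2}=H^{-1}$. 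This completes~(\ref{dpolarperp}).

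I do not anticipate a genuine obstacle here — the lemma is essentially a corollary of Lemma~\ref{lemma:Lg} once the functional form $\mathcal{P}(X)=Xh(X^*X)$ with $h(Z)=Z^{-1/2}$ is recognized. The one point requiring a modicum of care is the identification $\mathcal{R}(A)=\mathcal{R}(U)$ and $\mathcal{N}(A^*)=\mathcal{N}(U^*)$, which rests on the invertibility of $H$ (guaranteed by the full-rank hypothesis), and the elementary but essential fact that $h(H^2)=(H^2)^{-1/2}=H^{-1}$ for Hermitian positive definite $H$, i.e.\ that the principal square root commutes appropriately with the given positive definite $H$. Both are routine.
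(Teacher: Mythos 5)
Your proof is correct and follows essentially the same route as the paper's: apply Lemma~\ref{lemma:Lg} with $h(Z)=Z^{-1/2}$ so that $g=\mathcal{P}$, then read off~(\ref{dpolarpar}) from the column-space inclusion and~(\ref{dpolarperp}) from~(\ref{Lgperp}) together with $(A^*A)^{-1/2}=H^{-1}$. The only difference is that you spell out the identifications $\mathcal{R}(A)=\mathcal{R}(U)$ and $\mathcal{N}(A^*)=\mathcal{N}(U^*)$ more explicitly than the paper does.
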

\begin{proof}
Apply Lemma~\ref{lemma:Lg} with the choice $h(X) = X^{-1/2}$, so that $g(X) = X (X^*X)^{-1/2} = \mathcal{P}(X)$.  Equation~(\ref{dpolarpar}) is a restatement of the fact that $\mathcal{R}(L_{\mathcal{P}}(A,E^\parallel)) \subseteq \mathcal{R}(A) = \mathcal{R}(U)$, while~(\ref{dpolarperp}) follows from~(\ref{Lgperp}) together with the identity $H = (A^*A)^{1/2}$.
\end{proof}

We will now show, with the help of Lemma~\ref{lemma:Lfo}, that the task of evaluating $L_{\mathcal{P}}(A,E)$ can essentially be reduced to the case in which $A$ is Hermitian positive definite and $E$ is skew-Hermitian.  

\begin{lemma} \label{lemma:dpolar}
Let $A \in \mathbb{C}^{m \times n}$ ($m \ge n$) be a full-rank matrix having polar decomposition $A=UH$, where $U \in \mathbb{C}^{m \times n}$ has orthonormal columns and $H \in \mathbb{C}^{n \times n}$ is Hermitian positive definite.  Then for any $E \in \mathbb{C}^{m \times n}$,
\begin{align}
\mathrm{skew}(U^* L_{\mathcal{P}}(A,E)) &= L_{\mathcal{P}}(H,\Omega), \label{skewLfo} \\
\mathrm{sym}(U^* L_{\mathcal{P}}(A,E)) &= L_{\mathcal{P}}(H,S) = 0, \label{symLfo}
\end{align}
where $\Omega = \mathrm{skew}(U^* E)$ and $S = \mathrm{sym}(U^* E)$.  Hence,
\begin{equation} \label{dpolarrelation}
U^* L_{\mathcal{P}}(A,E) = L_{\mathcal{P}}(H,\Omega).
\end{equation}
\end{lemma}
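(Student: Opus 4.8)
The plan is to peel $E$ into three pieces on which $U^{*}L_{\mathcal P}(A,\cdot)$ is easy to evaluate: the component orthogonal to $\mathcal R(A)$, a Hermitian perturbation of $H$, and a skew-Hermitian perturbation of $H$. All of the curve-differentiation arguments below are legitimate because $\mathcal P$ is Fr\'echet differentiable at every full-rank matrix, in particular at $A$ and at $H$, as already noted. First I would write
\[
E = E^{\perp} + US + U\Omega, \qquad E^{\perp}=(I-UU^{*})E,
\]
which is a valid decomposition since $S+\Omega=\mathrm{sym}(U^{*}E)+\mathrm{skew}(U^{*}E)=U^{*}E$ and hence $US+U\Omega=UU^{*}E$. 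By linearity of $L_{\mathcal P}(A,\cdot)$ it then suffices to evaluate $U^{*}L_{\mathcal P}(A,\cdot)$ on each summand. For the orthogonal piece, Lemma~\ref{lemma:Lfo} gives $L_{\mathcal P}(A,E^{\perp})=E^{\perp}H^{-1}$, and since $U^{*}E^{\perp}=U^{*}(I-UU^{*})E=0$ we get $U^{*}L_{\mathcal P}(A,E^{\perp})=0$.

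Next I would handle the two remaining pieces. For the Hermitian piece, observe that $A+tUS=U(H+tS)$, and since $H$ is Hermitian positive definite and positive definiteness is an open condition, $H+tS$ is Hermitian positive definite for all sufficiently small $t$; then $U(H+tS)$ is itself a polar decomposition, so $\mathcal P(A+tUS)=U$ is constant and $L_{\mathcal P}(A,US)=0$. The identical argument with $U$ replaced by the identity matrix shows $\mathcal P(H+tS)=I$ and hence $L_{\mathcal P}(H,S)=0$, which is the second equality in~(\ref{symLfo}). For the skew piece I would invoke the elementary identity $\mathcal P(UM)=U\mathcal P(M)$ for square invertible $M$, which follows at once from $\mathcal P(M)=M(M^{*}M)^{-1/2}$ and $(UM)^{*}(UM)=M^{*}M$; applying it to $M=H+t\Omega$ (invertible for small $t$) gives $\mathcal P(A+tU\Omega)=U\mathcal P(H+t\Omega)$, and differentiating at $t=0$ yields $L_{\mathcal P}(A,U\Omega)=U\,L_{\mathcal P}(H,\Omega)$, hence $U^{*}L_{\mathcal P}(A,U\Omega)=L_{\mathcal P}(H,\Omega)$. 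Summing the three contributions proves $U^{*}L_{\mathcal P}(A,E)=L_{\mathcal P}(H,\Omega)$, which is~(\ref{dpolarrelation}).

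To recover the refined statements~(\ref{skewLfo})--(\ref{symLfo}), I would next argue that $L_{\mathcal P}(H,\Omega)$ is skew-Hermitian: the curve $Q(t)=\mathcal P(H+t\Omega)$ consists of unitary matrices with $Q(0)=\mathcal P(H)=I$, so differentiating $Q(t)^{*}Q(t)=I$ at $t=0$ gives $L_{\mathcal P}(H,\Omega)^{*}+L_{\mathcal P}(H,\Omega)=0$. Since $U^{*}L_{\mathcal P}(A,E)$ then equals the skew-Hermitian matrix $L_{\mathcal P}(H,\Omega)$, taking Hermitian and skew-Hermitian parts gives $\mathrm{skew}(U^{*}L_{\mathcal P}(A,E))=L_{\mathcal P}(H,\Omega)$ and $\mathrm{sym}(U^{*}L_{\mathcal P}(A,E))=0=L_{\mathcal P}(H,S)$, completing the proof.

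The step I expect to be the main obstacle is the treatment of the component $E^{\perp}$: when $A$ is square, $U$ is unitary and $E^{\perp}=0$, so this issue is invisible, but in the rectangular case one genuinely needs the structural information in Lemma~\ref{lemma:Lfo} — namely that $L_{\mathcal P}(A,E^{\perp})=E^{\perp}H^{-1}$ has column space inside $\mathcal N(A^{*})=\mathcal N(U^{*})$ — to see that it drops out after left multiplication by $U^{*}$. Everything else reduces to the algebraic identities $\mathcal P(UM)=U\mathcal P(M)$ and $\mathcal P(\text{HPD})=I$ together with openness of positive definiteness.
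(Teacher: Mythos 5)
Your proof is correct and follows essentially the same route as the paper's: both split off the component of $E$ orthogonal to $\mathcal{R}(U)$ via Lemma~\ref{lemma:Lfo}, reduce the parallel component to a Fr\'echet derivative at $H$ using $\mathcal{P}(VM)=V\mathcal{P}(M)$, kill the Hermitian perturbation by openness of positive definiteness, and derive skew-Hermiticity of $L_{\mathcal{P}}(H,\Omega)$ by differentiating the orthonormality constraint. The only difference is cosmetic — you split $E^\parallel$ into $US$ and $U\Omega$ at the outset and argue via explicit curves, whereas the paper first derives $U^*L_{\mathcal{P}}(A,E)=L_{\mathcal{P}}(H,U^*E)$ and only then decomposes $U^*E=\Omega+S$.
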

\begin{proof}

Decompose $E$ as
\[
E = E^\parallel + E^\perp, \quad E^\parallel = UU^* E, \quad  E^\perp = (I-UU^*) E.
\]
The linearity of the Fr\'{e}chet derivative implies that
\begin{align}
L_{\mathcal{P}}(A,E) 
&= L_{\mathcal{P}}(A,E^\parallel) +  L_{\mathcal{P}}(A,E^\perp). \nonumber
\end{align}
The formula~(\ref{dpolarperp}) and the identities $A=UH$ and $UU^* E^\parallel = E^\parallel$ then give
\[
L_{\mathcal{P}}(A,E) = L_{\mathcal{P}}(UH,UU^*E^\parallel) + E^\perp H^{-1}.
\]
Now note that the map $\mathcal{P}$ clearly satisfies $\mathcal{P}(VB)=V \mathcal{P}(B)$ for any nonsingular $B \in \mathbb{C}^{n \times n}$ and any $V \in \mathbb{C}^{m \times n}$ ($m \ge n$) with orthonormal columns.  From this it follows that for any such $V$ and $B$, and any $F \in \mathbb{C}^{n \times n}$,
\begin{equation} \label{dpolarsymmetry}
L_{\mathcal{P}}(VB,VF) =  V L_{\mathcal{P}}(B, F).
\end{equation}
Applying this identity to the case in which $B = H$, $V=U$, and $F = U^*E^\parallel$, we obtain
\begin{align*}
L_{\mathcal{P}}(UH,UU^* E^\parallel) 
&= U L_{\mathcal{P}}(H,U^* E^\parallel) \\
&= U L_{\mathcal{P}}(H,U^* E),
\end{align*}
where the second line follows from the fact that $U^* E^\perp = 0$.
Thus,
\[
L_{\mathcal{P}}(A,E) = U L_{\mathcal{P}}(H,U^* E) + E^\perp H^{-1}.
\]
Multiplying from the left by $U^*$ gives
\[
U^* L_{\mathcal{P}}(A,E) = L_{\mathcal{P}}(H,U^* E).
\]
since $U^*U = I$ and $U^* E^\perp = 0$.  Equivalently, in terms of $\Omega = \mathrm{skew}(U^* E)$ and $S = \mathrm{sym}(U^* E)$, 
\[
U^* L_{\mathcal{P}}(A,E) = L_{\mathcal{P}}(H,\Omega) + L_{\mathcal{P}}(H,S)
\] 
The proof will be complete if we can show that $L_{\mathcal{P}}(H, \Omega)$ is skew-Hermitian and
\begin{equation} \label{dpolarsym}
L_{\mathcal{P}}(H,S) = 0.
\end{equation}
In fact,~(\ref{dpolarsym}) holds for any Hermitian matrix $S$ since, for all sufficiently small $\varepsilon$, $H+\varepsilon S$ is Hermitian positive definite, showing that $\mathcal{P}(H+\varepsilon S)=I$.  The skew-Hermiticity of $L_{\mathcal{P}}(H, \Omega)$ follows from differentiating the identity
\[
\mathcal{P}(H+\varepsilon \Omega)^* \mathcal{P}(H+\varepsilon \Omega) = I
\]
with respect to $\varepsilon$ and using the fact that $\mathcal{P}(H)=I$.
\end{proof}

Another consequence of Lemma~\ref{lemma:Lg} is the following identity that relates the Fr\'{e}chet derivative of the polar decomposition of a Hermitian positive definite matrix to the matrix sign function applied to a certain block matrix.

\begin{lemma} \label{lemma:signsym}
Let $H \in \mathbb{R}^{n \times n}$ be Hermitian positive definite, and let $\Omega \in \mathbb{R}^{n \times n}$ be skew-Hermitian.  Then
\begin{align}
\mathrm{sign}\begin{pmatrix} H & \Omega \\ 0 & -H \end{pmatrix} &= \begin{pmatrix} I & L_{\mathcal{P}}(H,\Omega) \\ 0 & -I \end{pmatrix}, \label{signHW} \\
\mathrm{sign} \begin{pmatrix} H & S \\ 0 & H \end{pmatrix} &= \begin{pmatrix} I & L_{\mathcal{P}}(H,S) \\ 0 & I \end{pmatrix} = \begin{pmatrix} I & 0 \\ 0 & I \end{pmatrix}. \label{signHS}
\end{align}
\end{lemma}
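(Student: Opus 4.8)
The plan is to verify the two identities by direct computation using the definition $\mathrm{sign}(Z) = Z(Z^2)^{-1/2}$ together with the block structure of the matrices involved, falling back on the coupled-iteration viewpoint only if the closed-form manipulation gets unwieldy. I would handle the two equations in increasing order of difficulty. Equation~(\ref{signHS}) is the easy one: squaring $\begin{pmatrix} H & S \\ 0 & H \end{pmatrix}$ gives $\begin{pmatrix} H^2 & HS+SH \\ 0 & H^2 \end{pmatrix}$, whose sign is $I$ because all eigenvalues (those of $H^2$, hence of $H$) are positive; equivalently, one observes that for small $\varepsilon$ the matrix $H + \varepsilon S$ is Hermitian positive definite so $\mathcal{P}(H+\varepsilon S) = I$, whence $L_{\mathcal{P}}(H,S) = 0$, exactly as recorded in the proof of Lemma~\ref{lemma:dpolar} (equation~(\ref{dpolarsym})). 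The cleanest route here is to invoke that fact directly: Lemma~\ref{lemma:dpolar} already gives $L_{\mathcal{P}}(H,S)=0$, and a $2\times 2$ block upper-triangular matrix with $H$ on both diagonal blocks and an off-diagonal block commuting appropriately has sign equal to its diagonal sign, which is $I$.

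For~(\ref{signHW}), I would set $W := \begin{pmatrix} H & \Omega \\ 0 & -H \end{pmatrix}$ and compute $W^2 = \begin{pmatrix} H^2 & H\Omega - \Omega H \\ 0 & H^2 \end{pmatrix}$, so $W^2$ is block upper triangular with $H^2$ on the diagonal; call the off-diagonal block $G := H\Omega - \Omega H$ (Hermitian, since $\Omega$ is skew-Hermitian). Then $(W^2)^{-1/2}$ is again block upper triangular, of the form $\begin{pmatrix} H^{-1} & M \\ 0 & H^{-1} \end{pmatrix}$ where $M$ solves the Sylvester equation $H M + M H = -H^{-1} G H^{-1}$ (obtained by requiring that the square of $(W^2)^{-1/2}$ recovers $(W^2)^{-1} = \begin{pmatrix} H^{-2} & -H^{-2} G H^{-2} \\ 0 & H^{-2} \end{pmatrix}$). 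Multiplying out $W (W^2)^{-1/2}$ then yields $\begin{pmatrix} I & H M - \Omega H^{-1} \\ 0 & -I \end{pmatrix}$, so the claim reduces to the identity $L_{\mathcal{P}}(H,\Omega) = HM - \Omega H^{-1}$ with $M$ as above. To pin down $L_{\mathcal{P}}(H,\Omega)$, I would differentiate $\mathcal{P}(H+\varepsilon\Omega) = (H+\varepsilon\Omega)\big((H+\varepsilon\Omega)^2\big)^{-1/2}$ — or more conveniently $(H+\varepsilon\Omega) = \mathcal{P}(H+\varepsilon\Omega) \cdot \mathcal{H}(H+\varepsilon\Omega)$ — at $\varepsilon = 0$, using $\mathcal{P}(H)=I$, $\mathcal{H}(H)=H$. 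Writing $L_{\mathcal{P}}(H,\Omega) =: Q$ (skew-Hermitian by Lemma~\ref{lemma:dpolar}) and $L_{\mathcal{H}}(H,\Omega) =: P$ (Hermitian), the product rule gives $\Omega = Q H + P$; taking skew-Hermitian parts and using $(QH)^* = -HQ$, $P^*=P$ yields $\Omega = \tfrac12(QH - HQ)$, i.e. $Q$ is the solution of the Sylvester equation $QH - HQ = 2\,\mathrm{skew}(\Omega) = 2\Omega$ — wait, more carefully: $\Omega = \mathrm{skew}(QH) = \tfrac12(QH + HQ)$? I need to track signs: $(QH)^* = H^*Q^* = H(-Q) = -HQ$, so $QH$ is not Hermitian or skew in general, and $\mathrm{skew}(QH) = \tfrac12(QH - (QH)^*) = \tfrac12(QH + HQ)$. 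Since $\Omega$ is skew and $P$ is Hermitian, taking skew parts of $\Omega = QH + P$ gives $\Omega = \tfrac12(QH+HQ)$, the Sylvester equation determining $Q$ uniquely (as $H \succ 0$). It then remains to check that $Q = HM - \Omega H^{-1}$ satisfies this same Sylvester equation, which is a direct substitution using the defining equation $HM + MH = -H^{-1}GH^{-1} = -H^{-1}(H\Omega - \Omega H)H^{-1} = -\Omega H^{-1} + H^{-1}\Omega$; by uniqueness the two expressions agree.

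The main obstacle is the bookkeeping around the matrix square root of the block matrix $W^2$: one must justify that $(W^2)^{-1/2}$ really is block upper triangular with the claimed diagonal and that the Sylvester equation for the off-diagonal block $M$ has a unique solution and is the "right" branch of the square root. The cleanest way to sidestep sign-of-a-Jordan-like-block subtleties is to note that $W^2$ is diagonalizable (indeed $W$ itself is, since its eigenvalues $\pm\lambda_i(H)$ are distinct in sign and $H$ is diagonalizable — or at worst one argues on each eigenspace), so the primary matrix function $(W^2)^{-1/2}$ is well-defined and block triangularity is preserved because $W^2$ is block triangular and the function is primary (\cite[Theorem 1.13]{higham2008functions}); uniqueness of $M$ then follows from the fact that the Sylvester operator $M \mapsto HM + MH$ is invertible when $H$ is positive definite. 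Once that is in place, the remainder is the routine Sylvester-equation comparison sketched above. Alternatively — and this is perhaps the slickest finish — one can observe that~(\ref{signHW}) is exactly the $E \mapsto \Omega$, $A \mapsto H$ (so $U = I$) special case that Theorem~\ref{thm:signblockmat} asserts in general, and prove~(\ref{signHW}) first as the base case from which~(\ref{signHW0}) is bootstrapped via Lemma~\ref{lemma:dpolar}'s identity $U^* L_{\mathcal{P}}(A,E) = L_{\mathcal{P}}(H,\Omega)$; I would structure the writeup so that Lemma~\ref{lemma:signsym} is the genuinely computational heart and Theorem~\ref{thm:signblockmat} is a short corollary.
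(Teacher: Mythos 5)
Your approach is fundamentally sound, and it is interesting that it is closer in spirit to the \emph{alternative} proof of~(\ref{signHW}) that the paper sketches in the remark after this lemma and carries out in Section~\ref{sec:comparison} (via a Lyapunov equation) than to the paper's main proof. Both you and the paper begin identically: compute $W^2$, apply~(\ref{mathias}) to the primary function $x^{-1/2}$ to get $(W^2)^{-1/2} = \begin{pmatrix} H^{-1} & M \\ 0 & H^{-1} \end{pmatrix}$ with $M = L_{x^{-1/2}}(H^2,\, H\Omega-\Omega H)$, and multiply out $W(W^2)^{-1/2}$. The paper then finishes in one line by recognizing the $(1,2)$ block as $L_{\mathcal{P}}(H,\Omega)$ directly from the chain-rule formula~(\ref{Lg}), namely $L_{\mathcal{P}}(H,\Omega) = \Omega H^{-1} + H\,L_{x^{-1/2}}(H^2,\, H\Omega+\Omega^*H)$. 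You instead characterize $M$ by a Sylvester equation and separately derive the Lyapunov equation $QH+HQ=2\Omega$ for $Q=L_{\mathcal{P}}(H,\Omega)$, then match by uniqueness. This works and is self-contained, but it is a longer route, and it essentially reproduces the Lyapunov-equation argument the paper already offers as an alternative.

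There is, however, one arithmetic slip you need to fix before the final uniqueness comparison goes through: the $(1,2)$ block of
$\begin{pmatrix} H & \Omega \\ 0 & -H \end{pmatrix}\begin{pmatrix} H^{-1} & M \\ 0 & H^{-1} \end{pmatrix}$
is $HM + \Omega H^{-1}$, \emph{not} $HM - \Omega H^{-1}$. Your candidate $Q = HM - \Omega H^{-1}$ yields $QH+HQ = -2H\Omega H^{-1}$, not $2\Omega$, whereas the corrected $Q = HM + \Omega H^{-1}$ does satisfy $QH+HQ=2\Omega$ (using $HM+MH = H^{-1}\Omega - \Omega H^{-1}$). Two smaller remarks: for~(\ref{signHS}) you need no condition on the off-diagonal block --- the eigenvalues of $\begin{pmatrix} H & S \\ 0 & H \end{pmatrix}$ are those of $H$, all positive, so its sign is $I$ regardless (note it is the eigenvalues of $W$, not $W^2$, that settle the sign); and the diagonalizability concerns at the end are unnecessary, since the Mathias identity~(\ref{mathias}) holds for arbitrary Jordan structure provided $x^{-1/2}$ is sufficiently smooth near the spectrum of $H^2$, which it is because that spectrum lies in the open positive reals.
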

\begin{proof}
By definition,
\begin{align*}
\mathrm{sign}\begin{pmatrix} H & \Omega \\ 0 & -H \end{pmatrix}
&= \begin{pmatrix} H & \Omega \\ 0 & -H \end{pmatrix} \begin{pmatrix} H^2 & H\Omega-\Omega H \\ 0 & H^2 \\ \end{pmatrix}^{-1/2} \\
&= \begin{pmatrix} H & \Omega \\ 0 & -H \end{pmatrix} \begin{pmatrix} H^2 & H\Omega+\Omega^* H \\ 0 & H^2 \\ \end{pmatrix}^{-1/2}.
\end{align*}
Now apply~(\ref{mathias}) to the primary matrix function $f(X)=X^{-1/2}$ to obtain
\begin{align*}
\begin{pmatrix} H^2 & H\Omega+\Omega^* H \\ 0 & H^2 \\ \end{pmatrix}^{-1/2} &= \begin{pmatrix} H^{-1} & L_{x^{-1/2}}(H^2,H\Omega+\Omega^* H) \\ 0 & H^{-1} \\ \end{pmatrix},
\end{align*}
where the identity $(H^2)^{-1/2}=H^{-1}$ follows from the positive-definiteness of $H$.  Thus,
\begin{align*}
\mathrm{sign}\begin{pmatrix} H & \Omega \\ 0 & -H \end{pmatrix} 
&= \begin{pmatrix} H & \Omega \\ 0 & -H \end{pmatrix} \begin{pmatrix} H^{-1} & L_{x^{-1/2}}(H^2,H\Omega+\Omega^* H) \\ 0 & H^{-1} \\ \end{pmatrix} \\
&= \begin{pmatrix} I & H L_{x^{-1/2}}(H^2,H\Omega+\Omega^* H) + \Omega H^{-1} \\ 0 & -I \end{pmatrix}.
\end{align*}
The identity~(\ref{signHW}) follows upon observing that, by~(\ref{Lg}),
\begin{align*}
L_{\mathcal{P}}(H,\Omega) &= \Omega H^{-1} + H L_{x^{-1/2}}(H^2,H\Omega+\Omega^* H).
\end{align*}
The proof of~(\ref{signHS}) is simpler, since, by~(\ref{mathias}) and~(\ref{symLfo}),
\begin{displaymath}
\mathrm{sign} \begin{pmatrix} H & S \\ 0 & H \end{pmatrix} = \begin{pmatrix} I & L_{\mathrm{sign}}(H,S) \\ 0 & I \end{pmatrix} = \begin{pmatrix} I & 0 \\ 0 & I \end{pmatrix}. 
\end{displaymath}
\end{proof}

We remark that an alternative proof of~(\ref{signHW}) exists.  It is based on the observation that $L_{\mathcal{P}}(H,\Omega)$ is the solution of a Lyapunov equation which can be solved by reading off the $(1,2)$-block of $\mathrm{sign}\begin{pmatrix} H & \Omega \\ 0 & -H \end{pmatrix}$.  For details, see Section~\ref{sec:comparison}.

Combining Lemma~\ref{lemma:signsym} with Lemma~\ref{lemma:dpolar} proves Theorem~\ref{thm:signblockmat}.

\subsection{Convergence of the Iteration} \label{sec:thmproof}

We now focus our efforts on proving convergence of the iteration~(\ref{Xupdate2}-\ref{Eupdate2}), thereby proving Theorem~\ref{thm:dpolariter}.  The cornerstone of the proof is Lemma~\ref{lemma:relation}, where a relationship is established between certain blocks of the matrices $Z_k$ defined by the matrix sign function $Z_{k+1} = Z_k h(Z_k)^2$ and the matrices $X_k$ and $E_k$ defined by the iteration~(\ref{Xupdate2}-\ref{Eupdate2}).  Once this has been shown, convergence of the iteration~(\ref{Xupdate2}-\ref{Eupdate2}) will follow from the convergence of $Z_k$ to $\mathrm{sign}(Z_0)$, together with the knowledge (from Theorem~\ref{thm:signblockmat}) that the Fr\'{e}chet derivative of the polar decomposition is related to the $(1,2)$-block of $\mathrm{sign}(Z_0)$ for certain values of $Z_0$.

We begin by examining the block structure of the iterates $Z_k$.

\begin{lemma} \label{lemma:YkOmegak}
The iterates $Z_k$ produced by the iteration $Z_{k+1} = Z_k h(Z_k^2)$ with initial condition~(\ref{Z0}) have the form
\[
Z_k = \begin{pmatrix} H_k & \Omega_k \\ 0 & -H_k \end{pmatrix},
\]
where $H_k$ is Hermitian and $\Omega_k$ is skew-Hermitian.  
\end{lemma}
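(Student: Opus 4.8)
The plan is to argue by induction on $k$, the case $k=0$ being immediate from~(\ref{Z0}) and the hypotheses that $H$ is Hermitian (positive definite) and $\Omega$ is skew-Hermitian. For the inductive step it is cleanest not to track the conditions ``$H_k$ Hermitian'' and ``$\Omega_k$ skew-Hermitian'' separately, but to work with the following three properties of a $2n \times 2n$ matrix $Z$, whose conjunction is equivalent, by a short block computation, to $Z$ having the form asserted in the lemma: (a) $Z$ is block upper triangular with vanishing $(2,1)$ block; (b) the $(1,1)$ block of $Z$ is Hermitian; and (c) $JZ$ is skew-Hermitian, where $J = \begin{pmatrix} 0 & I \\ I & 0 \end{pmatrix}$ --- equivalently, $JZJ = -Z^*$. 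It then suffices to show that each of (a), (b), and (c) is inherited by $Zh(Z^2)$ whenever it holds for $Z$.

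Preservation of (a) is routine: if $Z_k$ is block upper triangular, so is $Z_k^2$, hence so is $h(Z_k^2)$ (a primary matrix function of a block triangular matrix is block triangular), hence so is the product $Z_{k+1} = Z_k h(Z_k^2)$. For (b), the $(1,1)$ block of $Z_{k+1}$ equals $H_k\, h(H_k^2)$, because the $(1,1)$ block of $Z_k^2$ is $H_k^2$ and the $(1,1)$ block of a primary matrix function of a block upper triangular matrix is that function applied to the $(1,1)$ block. Now $h(H_k^2)$ is Hermitian, since $H_k^2$ is Hermitian and $h(W^*)=h(W)^*$; and $h(H_k^2)$ commutes with $H_k$, being a polynomial in $H_k^2$. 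A product of two commuting Hermitian matrices is Hermitian, so the $(1,1)$ block of $Z_{k+1}$ is Hermitian and (b) is preserved.

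Preservation of (c) is the crux. Suppose $JZ_k$ is skew-Hermitian, that is, $Z_k^* = -JZ_kJ$. Then $(Z_k^2)^* = (Z_k^*)^2 = JZ_k^2J$, and applying $h$ --- using the hypothesis $h(W^*)=h(W)^*$ and the fact that primary matrix functions commute with conjugation by the invertible matrix $J$ --- gives $h(Z_k^2)^* = h(JZ_k^2J) = J\,h(Z_k^2)\,J$. Since moreover $h(Z_k^2)$ commutes with $Z_k$ (it is a polynomial in $Z_k^2$, hence in $Z_k$), we compute
\[
(JZ_{k+1})^* = h(Z_k^2)^* Z_k^* J = (J h(Z_k^2) J)(-JZ_kJ)J = -J\,h(Z_k^2)\,Z_k = -JZ_k h(Z_k^2) = -JZ_{k+1},
\]
so $JZ_{k+1}$ is skew-Hermitian, completing the inductive step.

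The step I expect to be the main obstacle is precisely the preservation of (c): it hinges on correctly combining three ingredients --- the symmetry hypothesis $h(Z^*)=h(Z)^*$, the commutation of the primary matrix function $h(Z_k^2)$ with $Z_k$, and the compatibility of primary matrix functions with conjugation by the fixed involution $J$ --- each easy in isolation. An alternative route, which avoids the $J$-bookkeeping at the cost of an explicit calculation, is to verify directly that the $(1,2)$ block of $Z_{k+1}$ is skew-Hermitian: writing $Z_{k+1}=Z_k h(Z_k^2)$ in block form and expanding $h(Z_k^2)$ by means of~(\ref{mathias}) yields $\Omega_{k+1} = H_k\, L_h(H_k^2,\,H_k\Omega_k-\Omega_k H_k) + \Omega_k\, h(H_k^2)$, and skew-Hermiticity of this expression reduces to the operator identity
\[
L_h(H_k^2,\,B)\,H_k + H_k\,L_h(H_k^2,\,B) = h(H_k^2)\,\Omega_k - \Omega_k\, h(H_k^2), \qquad B := H_k\Omega_k-\Omega_k H_k,
\]
which follows by replacing $h$ with its Hermite interpolating polynomial at the eigenvalues of $H_k^2$ --- so that $L_h(H_k^2,\cdot)$ becomes $L_q(H_k^2,\cdot)$ for a polynomial $q$ --- and telescoping over monomials. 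In either approach, everything beyond the skew-Hermiticity of the off-diagonal block is pure bookkeeping.
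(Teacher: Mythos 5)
Your proof is correct, and the main argument is a genuinely different route from the paper's. The paper proceeds block-by-block: it invokes~(\ref{mathias}) to expand $h(Z_k^2)$ explicitly, writes $Z_{k+1}$ in block form, uses the commutativity $Z_k h(Z_k^2) = h(Z_k^2) Z_k$ to obtain a second expression for $Z_{k+1}$, and then equates the $(1,2)$ blocks of the two expressions to conclude $\Omega_{k+1}+\Omega_{k+1}^*=0$ (using the same three ingredients you identify: $h(W^*)=h(W)^*$, Hermiticity of $C_k$, and hence Hermiticity of $L_h(H_k^2,C_k)$). Your main argument instead encodes the block structure as preservation of the anti-involution $Z \mapsto -JZ^*J$ (with $J=\begin{pmatrix}0&I\\I&0\end{pmatrix}$) together with block upper triangularity and Hermiticity of the $(1,1)$ block, and shows each property is inherited by $Z\,h(Z^2)$ via the similarity-invariance of primary matrix functions. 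This is cleaner and more conceptual, at the cost of not directly producing the explicit recursions~(\ref{Yupdate})--(\ref{Omegaupdate}) that the paper extracts from its block computation and then reuses in the proof of Lemma~\ref{lemma:relation}; your sketched ``alternative route'' at the end is, in effect, exactly the paper's argument and does recover~(\ref{Omegaupdate}). One small simplification worth noting: (a) and (c) alone already force the $(2,2)$ block to equal $-H_k^*$ and $\Omega_k$ to be skew-Hermitian, so (b) is only needed to upgrade $-H_k^*$ to $-H_k$; your presentation is nonetheless sound.
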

\begin{proof}
Assume the statement is true at iteration $k$.  Then by~(\ref{mathias}),
\begin{align}
Z_{k+1} 
&= \begin{pmatrix} H_k & \Omega_k \\ 0 & -H_k \end{pmatrix} h \begin{pmatrix} H_k^2 & H_k \Omega_k - \Omega_k H_k \\ 0 & H_k^2 \end{pmatrix} \nonumber \\
&= \begin{pmatrix} H_k & \Omega_k \\ 0 & -H_k \end{pmatrix} \begin{pmatrix} h(H_k^2) & L_h( H_k^2, H_k \Omega_k - \Omega_k H_k) \\ 0 & h(H_k^2) \end{pmatrix} \nonumber \\
&= \begin{pmatrix} H_k h(H_k^2) & H_k L_h( H_k^2, H_k \Omega_k - \Omega_k H_k) +\Omega_k h(H_k^2) \\ 0 & -H_k h(H_k^2) \end{pmatrix}. \label{Zkp1}
\end{align}
By the remark following Theorem~\ref{thm:dpolariter}, $H_k h(H_k^2) = \left[ H_k h(H_k^2) \right]^*$,
showing that $H_{k+1} = H_k h(H_k^2)$ is Hermitian.  On the other hand, the fact that $h$ is a primary matrix function implies that $Z_k$ commutes with $h(Z_k^2)$, so, by a calculation similar to that above, we also have
\begin{align}
Z_{k+1}
&= \begin{pmatrix} h(H_k^2) H_k  & h(H_k^2) \Omega_k - L_h( H_k^2, H_k \Omega_k - \Omega_k H_k) H_k  \\ 0 & -h(H_k^2) H_k \end{pmatrix}. \label{Zkp1b}
\end{align}
Denote $C_k = H_k \Omega_k - \Omega_k H_k$.  Since $H_k$ is Hermitian and $\Omega_k$ is skew-Hermitian, $C_k$ is Hermitian.  Hence, since $h(Z^*)=h(Z)^*$ for every $Z$,
\begin{align*}
L_h(H_k^2,C_k)^* 
&= L_h((H_k^2)^*,C_k^*) \\
&= L_h(H_k^2,C_k).
\end{align*}
Comparing the $(1,2)$ blocks of~(\ref{Zkp1}) and~(\ref{Zkp1b}) then shows that
\begin{align}
0 
&= H_k L_h(H_k^2,C_k) + \Omega_k h(H_k^2) - h(H_k^2) \Omega_k + L_h(H_k^2,C_k) H_k  \nonumber \\
&= H_k L_h(H_k^2,C_k) + \Omega_k h(H_k^2) + h(H_k^2)^* \Omega_k^* + L_h(H_k^2,C_k)^* H_k^* \nonumber \\
&= \Omega_{k+1} + \Omega_{k+1}^*.
\end{align}
It follows that $\Omega_k=-\Omega_k^*$ for every $k$.
\end{proof}

The proof above also reveals a recursion satisfied by $H_k$ and $\Omega_k$, namely,
\begin{align}
H_{k+1} &= H_k h(H_k^2) \label{Yupdate} \\
\Omega_{k+1} &= \Omega_k h(H_k^2) + H_k L_h(H_k^2, H_k \Omega_k - \Omega_k H_k). \label{Omegaupdate}
\end{align}

Next, we examine the block structure of the iterates $Z_k$ with initial condition~(\ref{Z0b}).

\begin{lemma} \label{lemma:YkSk}
The iterates $Z_k$ produced by the iteration $Z_{k+1} = Z_k h(Z_k^2)$ with initial condition~(\ref{Z0b}) have the form
\[
Z_k = \begin{pmatrix} H_k & S_k \\ 0 & H_k \end{pmatrix},
\]
where $H_k$ is the same Hermitian matrix as in Lemma~\ref{lemma:YkOmegak} and $S_k$ is Hermitian.  
\end{lemma}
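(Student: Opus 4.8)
The plan is to mimic the proof of Lemma~\ref{lemma:YkOmegak} step for step, with ``skew-Hermitian'' replaced everywhere by ``Hermitian'', arguing by induction on $k$. The base case is immediate from~(\ref{Z0b}): $H_0 = H$ is Hermitian positive definite and $S_0 = S$ is Hermitian.

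For the inductive step, suppose $Z_k = \begin{pmatrix} H_k & S_k \\ 0 & H_k \end{pmatrix}$ with both $H_k$ and $S_k$ Hermitian. Then $Z_k^2 = \begin{pmatrix} H_k^2 & D_k \\ 0 & H_k^2 \end{pmatrix}$ with $D_k = H_k S_k + S_k H_k$, and applying~(\ref{mathias}) to the primary matrix function $h$ (justified exactly as in Lemma~\ref{lemma:YkOmegak}) and multiplying out $Z_{k+1} = Z_k h(Z_k^2)$ shows that $Z_{k+1}$ is again block upper triangular with equal diagonal blocks $H_{k+1} = H_k h(H_k^2)$ and off-diagonal block $S_{k+1} = H_k L_h(H_k^2, D_k) + S_k h(H_k^2)$. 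The recursion and initial condition for $H_k$ are verbatim those of~(\ref{Yupdate}), so $H_k$ is literally the same sequence as in Lemma~\ref{lemma:YkOmegak}, and hence Hermitian. It then remains only to check that $S_{k+1}$ is Hermitian.

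For that I would reuse the commutativity observation from Lemma~\ref{lemma:YkOmegak}: since $h$ is a primary matrix function, $Z_k$ commutes with $h(Z_k^2)$, so computing $Z_{k+1} = h(Z_k^2) Z_k$ instead yields the alternative expression $S_{k+1} = h(H_k^2) S_k + L_h(H_k^2, D_k) H_k$. Now $D_k$ is Hermitian because $H_k$ and $S_k$ are, and the hypothesis $h(Z^*) = h(Z)^*$ forces $h(H_k^2)$ to be Hermitian and $L_h(H_k^2, D_k)^* = L_h((H_k^2)^*, D_k^*) = L_h(H_k^2, D_k)$. Taking the conjugate transpose of the first expression for $S_{k+1}$ then reproduces the second expression, whence $S_{k+1}^* = S_{k+1}$. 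I anticipate no real obstacle here, since the whole argument is the mirror image of Lemma~\ref{lemma:YkOmegak}; the only points genuinely worth checking are that the analogue $D_k = H_k S_k + S_k H_k$ of the matrix $C_k = H_k \Omega_k - \Omega_k H_k$ is still Hermitian (it is, now for the simpler reason that $H_k$ and $S_k$ are both Hermitian), and that the diagonal-block recursion agrees exactly with~(\ref{Yupdate}) so that identifying $H_k$ with the sequence of Lemma~\ref{lemma:YkOmegak} is legitimate rather than merely plausible.
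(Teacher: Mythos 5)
Your proposal is correct and is precisely the argument the paper has in mind: the paper omits this proof, saying only that it is ``very similar to the proof of Lemma~\ref{lemma:YkOmegak},'' and you have correctly carried out that mirroring (same use of~(\ref{mathias}), same commutativity trick via $Z_k h(Z_k^2) = h(Z_k^2) Z_k$, same appeal to $h(Z^*) = h(Z)^*$), with the Hermiticity of $D_k = H_k S_k + S_k H_k$ and the verbatim agreement of the diagonal-block recursion with~(\ref{Yupdate}) correctly identified as the only points needing a fresh check.
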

\begin{proof}
We omit the proof, which is very similar to the proof of Lemma~\ref{lemma:YkOmegak}.
\end{proof}

In analogy with~(\ref{Omegaupdate}), the iterates $S_k$ satisfy the recursion 
\begin{equation}
 S_{k+1} = S_k h(H_k^2) + H_k L_h(H_k^2, S_k H_k + H_k S_k). \label{Supdate}
\end{equation}

We now relate the matrices $H_k$, $\Omega_k$, and $S_k$ defined in the preceding pair of lemmas to the matrices $X_k$ and $E_k$ defined by the coupled iteration~(\ref{Xupdate2}-\ref{Eupdate2}).

\begin{lemma} \label{lemma:relation}
The iterates $H_k$, $\Omega_k$, and $S_k$ are related to $X_k$ and $E_k$ via
\begin{align}
U H_k &= X_k, \label{relationXY} \\
\Omega_k &= \mathrm{skew}(U^* E_k), \label{relationOmegaE} \\
S_k &= \mathrm{sym}(U^* E_k). \label{relationSE}
\end{align}
\end{lemma}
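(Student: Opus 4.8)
The plan is a straightforward induction on $k$, the engine being the explicit formula for $L_g$ in Lemma~\ref{lemma:Lg} together with the recursions~(\ref{Yupdate}), (\ref{Omegaupdate}), and~(\ref{Supdate}) satisfied by $H_k$, $\Omega_k$, and $S_k$. For the base case $k=0$, note that $X_0 = A = UH = UH_0$, while $\Omega_0 = \Omega = \mathrm{skew}(U^*E_0)$ and $S_0 = S = \mathrm{sym}(U^*E_0)$ by definition, so all three identities hold.

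For the inductive step, suppose~(\ref{relationXY}-\ref{relationSE}) hold at step $k$. Since $U^*U = I$ and $H_k$ is Hermitian (Lemma~\ref{lemma:YkOmegak}), the hypothesis $X_k = UH_k$ gives $X_k^*X_k = H_k^2$, so
\[
X_{k+1} = g(X_k) = X_k h(X_k^*X_k) = UH_k h(H_k^2) = UH_{k+1}
\]
by~(\ref{Yupdate}), which is~(\ref{relationXY}) at step $k+1$. For the Fr\'{e}chet derivative, I would first use $X_k = UH_k$ and the decomposition $U^*E_k = \Omega_k + S_k$ (so that $E_k^*U = S_k - \Omega_k$) to compute
\[
X_k^*E_k + E_k^*X_k = H_k(\Omega_k + S_k) + (S_k - \Omega_k)H_k = (H_k\Omega_k - \Omega_k H_k) + (H_kS_k + S_kH_k).
\]
Then, applying Lemma~\ref{lemma:Lg} to $g(X) = Xh(X^*X)$, multiplying $E_{k+1} = L_g(X_k,E_k)$ on the left by $U^*$, and using $U^*X_k = H_k$ together with the linearity of $L_h$ in its second argument, one obtains
\[
U^*E_{k+1} = (\Omega_k + S_k)h(H_k^2) + H_k L_h(H_k^2, H_k\Omega_k - \Omega_k H_k) + H_k L_h(H_k^2, H_kS_k + S_kH_k).
\]
Comparing this with~(\ref{Omegaupdate}) and~(\ref{Supdate}) shows that the right-hand side equals $\Omega_{k+1} + S_{k+1}$. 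Since Lemmas~\ref{lemma:YkOmegak} and~\ref{lemma:YkSk} already guarantee that $\Omega_{k+1}$ is skew-Hermitian and $S_{k+1}$ is Hermitian, taking skew-Hermitian and Hermitian parts of $U^*E_{k+1}$ yields~(\ref{relationOmegaE}) and~(\ref{relationSE}) at step $k+1$, closing the induction.

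I do not anticipate a genuine obstacle here; the only point demanding care is the bookkeeping in the derivative step --- correctly splitting $U^*E_k$ into Hermitian and skew-Hermitian parts, recognizing that $X_k^*E_k + E_k^*X_k$ splits accordingly into a commutator term $H_k\Omega_k - \Omega_k H_k$ and an anticommutator term $H_kS_k + S_kH_k$, and matching these against the second arguments of $L_h$ in~(\ref{Omegaupdate}) and~(\ref{Supdate}). Everything else follows from $U$ having orthonormal columns (whence $X_k^*X_k = H_k^2$) and the bilinearity of the Fr\'{e}chet derivative.
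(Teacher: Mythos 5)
Your proposal is correct and follows essentially the same route as the paper: induction on $k$, with the $X_k$-step handled via $X_k^*X_k = H_k^2$ and the $E_k$-step via Lemma~\ref{lemma:Lg}, multiplication by $U^*$, the split $X_k^*E_k + E_k^*X_k = (H_k\Omega_k - \Omega_k H_k) + (H_kS_k + S_kH_k)$, linearity of $L_h$, matching against~(\ref{Omegaupdate}) and~(\ref{Supdate}), and then invoking Lemmas~\ref{lemma:YkOmegak} and~\ref{lemma:YkSk} for the (skew-)Hermiticity needed to identify the two pieces as the Hermitian and skew-Hermitian parts. The only cosmetic difference is that you precompute the commutator/anticommutator decomposition of $X_k^*E_k + E_k^*X_k$ before applying $L_h$, whereas the paper substitutes into $L_h$ first and then splits; these are the same calculation.
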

\begin{proof}
The first of these equalities follows easily by induction, for if it holds at iteration $k$, then
\begin{align*}
X_{k+1} 
&= g(X_k) \\
&= X_k h(X_k^* X_k) \\
&= U H_k h(H_k^* U^* U H_k) \\
&= U H_k h(H_k^2) \\
&= U H_{k+1}.
\end{align*}
Furthermore, $X_0 = A = UH = UH_0$, which proves~(\ref{relationXY}).  
To prove~(\ref{relationOmegaE}) and~(\ref{relationSE}), we will show that if $\Omega_k = \mathrm{skew}(U^* E_k)$ and $S_k = \mathrm{sym}(U^* E_k)$ for a given $k$, and if $E_{k+1}$, $\Omega_{k+1}$, and $S_{k+1}$ are given by~(\ref{Eupdate2}),~(\ref{Omegaupdate}), and~(\ref{Supdate}), respectively, then $\Omega_{k+1} = \mathrm{skew}(U^* E_{k+1})$ and $S_{k+1} = \mathrm{sym}(U^* E_{k+1})$.  
Recalling~(\ref{Lg}), we have
\begin{align*}
U^* E_{k+1}
\hspace{-2em}&\hspace{2em}= U^* L_g(X_k,E_k) \\
&= U^* E_k h(X_k^* X_k) + U^* X_k L_h(X_k^* X_k, E_k^* X_k + X_k^* E_k) \\
&= U^* E_k h(H_k^2) + H_k L_h(H_k^2, E_k^* U H_k + H_k U^* E_k) \\
&= \Omega_k h(H_k^2) + H_k L_h(H_k^2, \Omega_k^* H_k + H_k \Omega_k) + S_k h(H_k^2) + H_k L_h(H_k^2, S_k H_k + H_k S_k) \\
&= \Omega_{k+1} + S_{k+1},
\end{align*}
where we have used~(\ref{Omegaupdate}),~(\ref{Supdate}), and the decomposition $U^* E_k = \Omega_k+S_k$.
By Lemmas~\ref{lemma:YkOmegak} and~\ref{lemma:YkSk}, $\Omega_{k+1}$ is skew-Hermitian and $S_{k+1}$ is Hermitian, proving~(\ref{relationOmegaE}) and~(\ref{relationSE}).
\end{proof}

The proof of Theorem~\ref{thm:dpolariter} is now almost complete, since by Lemma~\ref{lemma:YkOmegak} and Theorem~\ref{thm:signblockmat},
\[
\begin{pmatrix} H_k & \Omega_k \\ 0 & -H_k \end{pmatrix} \rightarrow \mathrm{sign} \begin{pmatrix} H & \Omega \\ 0 & -H \end{pmatrix} = \begin{pmatrix} I & U^* L_{\mathcal{P}}(A,E) \\ 0 & -I \end{pmatrix}
\]
as $k \rightarrow \infty$.  Likewise, by~(\ref{signHS}) and Lemma~\ref{lemma:YkSk},
\[
\begin{pmatrix} H_k & S_k \\ 0 & H_k \end{pmatrix} \rightarrow \mathrm{sign} \begin{pmatrix} H & S \\ 0 & H \end{pmatrix} = \begin{pmatrix} I & 0 \\ 0 & I \end{pmatrix},
\]
as $k \rightarrow \infty$.  
These observations, together with~(\ref{relationXY}-\ref{relationSE}), show that
\begin{align*}
X_k &\rightarrow U, \\
\mathrm{skew}(U^* E_k) &\rightarrow U^* L_{\mathcal{P}}(A,E), \\
\mathrm{sym}(U^* E_k) &\rightarrow 0
\end{align*}
as $k \rightarrow \infty$.  In other words,
\begin{align}
X_k &\rightarrow \mathcal{P}(A), \label{Xklimit} \\
U^* E_k &\rightarrow U^* L_{\mathcal{P}}(A,E) \label{Eklimit}
\end{align}
as $k \rightarrow \infty$.  The latter limit implies that $E_k \rightarrow L_{\mathcal{P}}(A,E)$ when $U$ is square, but not when $U$ is rectangular.  To handle the rectangular case, consider the decompositions
\begin{alignat*}{3}
E_k &= E_k^\parallel + E_k^\perp, \quad E_k^\parallel &= UU^* E_k, \quad E_k^\perp &= (I-UU^*) E_k,  \\
E &= E^\parallel + E^\perp, \quad E^\parallel &= UU^* E, \quad E^\perp &= (I-UU^*) E.
\end{alignat*}
By Lemma~\ref{lemma:Lfo} and the linearity of the Fr\'{e}chet derivative, the statement~(\ref{Eklimit}) is equivalent to the statement that
\begin{align*}
U^* E_k^\parallel 
&\rightarrow U^*L_{\mathcal{P}}(A,E^\parallel) + U^*L_{\mathcal{P}}(A,E^\perp) \\
&= U^*L_{\mathcal{P}}(A,E^\parallel).
\end{align*}
Multiplying from the left by $U$ and recalling that $UU^*E_k^\parallel = E_k^\parallel$ and $UU^*L_{\mathcal{P}}(A,E^\parallel)=L_{\mathcal{P}}(A,E^\parallel)$ (by~(\ref{dpolarpar})), we conclude that
\begin{equation}
E_k^\parallel \rightarrow L_{\mathcal{P}}(A,E^\parallel).
\end{equation}

The proof will of Theorem~\ref{thm:dpolariter} be complete if we can show that 
\begin{equation} \label{Ekperplimit}
E_k^\perp \rightarrow L_{\mathcal{P}}(A,E^\perp).
\end{equation}
This is carried out in the following lemma.

\begin{lemma}
As $k \rightarrow \infty$, $E_k^\perp \rightarrow L_{\mathcal{P}}(A,E^\perp)$.
\end{lemma}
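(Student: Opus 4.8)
The plan is to track the component $E_k^\perp = (I-UU^*)E_k$ through the iteration and show it obeys a scalar-like recursion that converges to $E^\perp H^{-1}$, which by~(\ref{dpolarperp}) equals $L_{\mathcal{P}}(A,E^\perp)$. The starting point is the formula~(\ref{Lg}) for $L_g$: since $E_{k+1} = L_g(X_k,E_k) = E_k h(X_k^*X_k) + X_k L_h(X_k^*X_k, E_k^*X_k + X_k^*E_k)$, and since $\mathcal{R}(X_k) = \mathcal{R}(U)$ by~(\ref{relationXY}), multiplying on the left by $I-UU^*$ annihilates the second term entirely. Thus $E_{k+1}^\perp = (I-UU^*)E_k h(X_k^*X_k) = E_k^\perp h(H_k^2)$, using $X_k^*X_k = H_k^2$ from Lemma~\ref{lemma:relation}. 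So $E_k^\perp$ satisfies the clean recursion
\begin{equation*}
E_{k+1}^\perp = E_k^\perp h(H_k^2), \qquad E_0^\perp = E^\perp,
\end{equation*}
and iterating gives $E_k^\perp = E^\perp \prod_{j=0}^{k-1} h(H_j^2)$ (the factors are functions of $H_j$, but they need not commute across different $j$, so one must be a little careful about ordering — keep the product ordered).

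Next I would identify the limit of the partial products. The key observation is that $H_k h(H_k^2) = H_{k+1}$ is exactly the scalar sign iteration~(\ref{Yupdate}) applied to the Hermitian positive definite matrix $H_0 = H$, and by hypothesis (the $E=0$ case, i.e.\ the classical~\cite[Theorem 8.13]{higham2008functions} situation, or directly from the convergence hypothesis in Theorem~\ref{thm:dpolariter} with $\Omega = 0$) we have $H_k \to \mathrm{sign}(H) = I$. Moreover all the $H_k$ commute with one another and with $H$, since they are all primary matrix functions of $H$; hence we may diagonalize $H$ once and reduce everything to the scalar iteration $y_{k+1} = y_k h(y_k^2)$ with $y_0$ a positive eigenvalue of $H$. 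For scalars, $y_k \to 1$ and $\prod_{j=0}^{k-1} h(y_j^2) = \prod_{j=0}^{k-1} (y_{j+1}/y_j) = y_k / y_0 \to 1/y_0$. Translating back, $\prod_{j=0}^{k-1} h(H_j^2) = H_k H_0^{-1} \to H^{-1}$, where the telescoping is legitimate precisely because all factors commute. Therefore $E_k^\perp = E^\perp H_k H^{-1} \to E^\perp H^{-1} = L_{\mathcal{P}}(A,E^\perp)$ by~(\ref{dpolarperp}), which is~(\ref{Ekperplimit}).

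I expect the only genuine subtlety to be the bookkeeping around non-commutativity and the legitimacy of the telescoping product: one needs that $h(H_j^2)$ and $H_j$ are simultaneously diagonalizable for all $j$ (true because each $H_j$ is a fixed polynomial, or more generally a primary matrix function, of $H$, hence all share $H$'s eigenvectors), so that $h(H_j^2) = H_{j+1}H_j^{-1}$ as an operator identity and the product genuinely telescopes to $H_k H_0^{-1}$. The invertibility of each $H_j$ along the orbit is guaranteed since $\Lambda(H_j)\subset\Lambda(Z_j)$ and $h$ is assumed smooth on a neighborhood of $\cup_k\Lambda(Z_k)$, with $H_j\to I$ ruling out degeneracy in the tail. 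Once this is in place, continuity of multiplication finishes the argument, and combining~(\ref{Ekperplimit}) with the already-established limit $E_k^\parallel \to L_{\mathcal{P}}(A,E^\parallel)$ and linearity of $L_{\mathcal{P}}$ completes the proof of Theorem~\ref{thm:dpolariter}.
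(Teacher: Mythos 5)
Your recursion $E_{k+1}^\perp = E_k^\perp h(X_k^* X_k)$ is exactly the one the paper obtains (via \eqref{Lgperp} rather than by projecting \eqref{Lg}, but the result is the same), and reducing the lemma to showing that the ordered product $\prod_{j=0}^{k-1} h(H_j^2)$ converges to $H^{-1}$ is the right skeleton. Where you diverge from the paper is in how you identify that limit. You telescope via $h(H_j^2) = H_j^{-1}H_{j+1}$, after arguing that the $H_j$ are all primary matrix functions of $H$ and hence commute, so the product collapses to $H^{-1}H_k \to H^{-1}$. The paper instead names the product $B_k$, notes that the $X$-recursion itself forces $X_k = A B_k$, and then reads off the limit from the already-established convergence $X_k \to U$: multiplying by $U^*$ gives $H\lim B_k = U^*\lim X_k = I$, hence $\lim B_k = H^{-1}$. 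Both are correct, but the paper's route is more economical: it needs neither the commutativity argument nor the invertibility of the intermediate $H_j$, both of which your telescoping requires.

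On invertibility, your stated justification has a small gap: smoothness of $h$ near $\cup_k\Lambda(Z_k)$ does not by itself keep $0$ out of $\Lambda(H_j)$, and ``$H_j\to I$ rules out degeneracy in the tail'' says nothing about a singular $H_j$ at some finite $j$. The fix is easy, though, and worth making explicit: if $H_j$ were singular, then $H_{j+1}=H_j\,h(H_j^2)$ has $H_j$ as a left factor, so $H_{j+1}$ (and by induction every $H_k$ with $k\ge j$) would be singular, contradicting $H_k\to I$. With that observation in place your telescoping argument is sound, and it does give the stated lemma, though at the cost of more bookkeeping than the paper's one-line $X_k = A B_k$ device.
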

\begin{proof}
By~(\ref{dpolarperp}), it suffices to show that
\[
E_k^\perp \rightarrow E^\perp H^{-1}.
\]
Using Lemma~\ref{lemma:Lg}, it is straightforward to see that
$E_k^\parallel$ and $E_k^\perp$ satisfy independent recursions of the form
\begin{align*}
E_{k+1}^\parallel &= L_g(X_k, E_k^\parallel), \\
E_{k+1}^\perp &= L_g(X_k, E_k^\perp). 
\end{align*}
Now since $\mathcal{R}(E_k^\perp)$ is orthogonal to $\mathcal{R}(U) \supseteq \mathcal{R}(UH_k) = \mathcal{R}(X_k)$, it follows from~(\ref{Lgperp}) that
\[
L_g(X_k, E_k^\perp) = E_k^\perp h(X_k^* X_k),
\]
so 
\begin{align*}
E_{k+1}^\perp 
&= E_k^\perp h(X_k^* X_k). 
\end{align*}
If we introduce the matrix $B_k \in \mathbb{C}^{n \times n}$ defined by the recursion
\[
B_{k+1} = B_k h(X_k^* X_k), \quad B_0 = I,
\]
then an inductive argument shows that
\[
E_k^\perp = E^\perp B_k.
\]
We claim that $B_k \rightarrow H^{-1}$ as $k \rightarrow \infty$.  To see this, observe that~(\ref{Xupdate2}) implies that
\[
X_k = X_0 B_k = A B_k.
\]
Since $X_k \rightarrow U$ as $k \rightarrow \infty$, we conclude that
\[
I = U^*U = U^* \lim_{k \rightarrow \infty} X_k =  U^* A \lim_{k \rightarrow \infty} B_k = H \lim_{k \rightarrow \infty} B_k.
\]
It follows that $E_k^\perp = E^\perp B_k  \rightarrow E^\perp H^{-1}$ as $k \rightarrow \infty$.
\end{proof}

\section{Practical Considerations} \label{sec:practical}

This section discusses several practical considerations concerning the iterative schemes detailed in Theorem~\ref{thm:dpolariter}.

\subsection{Scaling}

Scaling the iterates $X_k$ in the Newton iteration~(\ref{XupdateNewton}) often reduces the number of iterations required to achieve convergence~\cite[Chapter 8.6]{higham2008functions}.  If this strategy is generalized to the coupled iteration~(\ref{XupdateNewton}-\ref{EupdateNewton}), then the resulting iteration reads
\begin{alignat}{3}
X_{k+1} &= \frac{1}{2}(\mu_k X_k + \mu_k^{-1} X_k^{-*}), &\quad& X_0 = A, \label{XupdateNewtonscaled} \\
E_{k+1} &= \frac{1}{2}(\mu_k E_k - \mu_k^{-1} X_k^{-*} E_k^* X_k^{-*}), &\quad& E_0 = E, \label{EupdateNewtonscaled}
\end{alignat}
where $\mu_k > 0$ is a scaling factor chosen heuristically. Practical choices for $\mu_k$ include~\cite{higham2008functions}
\begin{equation} \label{scale1inf}
\mu_k = \left( \frac{\|X_k^{-1}\|_1 \|X_k^{-1}\|_{\infty}}{\|X_k\|_1 \|X_k\|_{\infty}} \right)^{1/4}
\end{equation}
and
\begin{equation} \label{scaleF}
\mu_k = \left( \frac{\|X_k^{-1}\|_F }{\|X_k\|_F } \right)^{1/2},
\end{equation}
where $\|\cdot\|_1$, $\|\cdot\|_\infty$, and $\|\cdot\|_F$ denote the matrix $1$-, $\infty$- and Frobenius norms, respectively.

More generally, scaling can be applied to other iterative schemes of the form~(\ref{Xupdate2}-\ref{Eupdate2}), leading to iterative schemes of the form
\begin{alignat}{3}
X_{k+1} &= g(\mu_k X_k), &\quad& X_0 = A, \label{Xupdatescaled} \\
E_{k+1} &= L_g(\mu_k X_k, \mu_k E_k), &\quad& E_0 = E. \label{Eupdatescaled}
\end{alignat}
Note that if $A$ is rectangular, then~(\ref{scale1inf}) and~(\ref{scaleF}) are inapplicable.  We have found 
\begin{equation} \label{scale1infrect}
\mu_k = \left( \frac{\|(X_k^*X_k)^{-1}\|_1 \|(X_k^*X_k)^{-1}\|_{\infty}}{\|X_k^*X_k\|_1 \|X_k^*X_k\|_{\infty}} \right)^{1/8}
\end{equation}
and
\begin{equation} \label{scaleFrect}
\mu_k = \left( \frac{\|(X_k^*X_k)^{-1}\|_F }{\|X_k^*X_k\|_F } \right)^{1/4}
\end{equation}
to be effective alternatives to~(\ref{scale1inf}) and~(\ref{scaleF}) in our numerical experiments with rectangular $A$.

\subsection{Termination Criteria} \label{sec:termination}

Determining when to terminate the iteration~(\ref{Xupdate2}-\ref{Eupdate2}) is a delicate task.  Termination criteria for~(\ref{Xupdate2}) by itself are, of course, well-studied, but the accuracy of $E_k$ should be taken into account when choosing termination criteria for the coupled iteration~(\ref{Xupdate2}-\ref{Eupdate2}).  

One possibility is to appeal to the relationship between $X_k$ and $E_k$ and the sign function iterates $Z_k$ referenced in the statement of Theorem~\ref{thm:dpolariter}.  Convergence of the sign function iterates to $\mathrm{sign}(Z_0) = \lim_{k\rightarrow \infty} Z_k$ can be readily verified with the aid of the inequality
\[
\frac{\|Z_k^2 - I\|}{\|\mathrm{sign}(Z_0)\|(\|Z_k\|+\|\mathrm{sign}(Z_0)\|} \le \frac{\|Z_k-\mathrm{sign}(Z_0)\|}{\|\mathrm{sign}(Z_0)\|} \le \|Z_k^2 - I\|,
\]
which holds in any submultiplicative matrix norm, so long as $\|\mathrm{sign}(Z_0)(Z_k\linebreak-\mathrm{sign}(Z_0))\| < 1$ and $Z_0$ has no pure imaginary eigenvalues~\cite[Lemma 5.12]{higham2008functions}.  In other words, $\|Z_k^2 - I\|$ provides an estimate for the accuracy of $Z_k$.  

For the iterates $Z_k$ with initial condition~(\ref{Z0}), we have, in the notation of\linebreak Lemma~\ref{lemma:YkOmegak},
\[
Z_k^2 - I = \begin{pmatrix} H_k^2 - I & H_k \Omega_k - \Omega_k H_k \\ 0 & H_k^2-I \end{pmatrix}.
\]
Likewise, for the iterates $Z_k$ with initial condition~(\ref{Z0b}), we have, in the notation of Lemma~\ref{lemma:YkSk},
\[
Z_k^2 - I = \begin{pmatrix} H_k^2 - I & H_k S_k + S_k H_k \\ 0 & H_k^2-I \end{pmatrix}.
\]
Thus, accuracy is assured when the quantities $\|H_k^2 - I\|$, $\|H_k \Omega_k - \Omega_k H_k\|$, and $\|H_k S_k + S_k H_k\|$ are small.  Of course, $H_k$, $\Omega_k$, and $S_k$ are never computed explicitly in the iteration~(\ref{Xupdate2}-\ref{Eupdate2}), so we must relate these quantities to $X_k$ and $E_k$ using Lemma~\ref{lemma:relation}.  
By~(\ref{relationXY}), we have
\[
H_k^2 - I = X_k^* X_k - I.
\]
The quantities $H_k \Omega_k - \Omega_k H_k$ and $H_k S_k + S_k H_k$ are more difficult to relate to $X_k$ and $E_k$ in a computable way (i.e., a way that does not involve knowing $U$ in advance).  However, second-order accurate approximations to $H_k \Omega_k - \Omega_k H_k$ and $H_k S_k + S_k H_k$ are available.  As shown in Appendix~\ref{sec:appendix}, we have
\begin{align}
H_k \Omega_k - \Omega_k H_k &= \frac{1}{2} \left( X_k^* X_k X_k^* E_k - X_k^* E_k X_k^* X_k \right) + F_k, \label{HWapprox} \\
H_k S_k - S_k H_k &= X_k^* E_k + E_k^* X_k - \frac{1}{2} \left( X_k^* X_k X_k^* E_k - X_k^* E_k X_k^* X_k \right) - F_k, \label{HSapprox}
\end{align}
where
\[
\|F_k\| = O\left(\|H_k^2-I\|^2+\|H_k^2-I\| \|H_k S_k + S_k H_k\| \right).
\]
Roughly speaking,~(\ref{HWapprox}) arises from the approximations $H_k \approx \frac{1}{2} (I + X_k^* X_k)$ and $\Omega_k \approx X_k^* E_k$.  It turns out that only the first of these approximations is second-order accurate (see Lemma~\ref{lemma:XTXapprox}), but delicate cancellations detailed in Appendix~\ref{sec:appendix} lead to the validity of~(\ref{HWapprox}).  One then deduces~(\ref{HSapprox}) by noting that $X_k^* E_k + E_k^* X_k = (H_k \Omega_k - \Omega_k H_k) + (H_k S_k - S_k H_k)$ (see Lemma~\ref{lemma:XE}).

In summary, the quantities
\begin{align}
\alpha_k &= X_k^* X_k - I, \label{alphak} \\
\beta_k &=\frac{1}{2}\left( X_k^* X_k X_k^* E_k - X_k^* E_k X_k^* X_k \right), \label{betak} \\
\gamma_k &= X_k^* E_k + E_k^* X_k - \beta_k \label{gammak}
\end{align}
are computable approximations to $H_k^2-I$, $H_k\Omega_k-\Omega_k H_k$, and $H_k S_k + S_k H_k$, respectively. These are small in norm if and only if $\|Z_k-\mathrm{sign}(Z_0)\|$ is small (for each of the initial conditions~(\ref{Z0}) and~(\ref{Z0b})), which is true if and only if $\|X_k - U\|$ and $\|E_k - L_{\mathcal{P}}(A,E)\|$ are small.  As a practical note, these arguments appear to break down if $A$ is very ill-conditioned, as illustrated in Section~\ref{sec:numerical}.

Based on these considerations, we propose that the iterations be terminated when
\begin{align}
\|\alpha_k\| \le \delta \|X_k\| \quad \text{ and } \quad \|\beta_k\| + \|\gamma_k\| \le \varepsilon \|E_k\|, \label{terminationcriteria}
\end{align}
where $\delta$ and $\varepsilon$ are relative error tolerances for $\|X_k - U\|$ and $\|E_k - L_{\mathcal{P}}(A,E)\|$, respectively.  

As an alternative approach to terminating the iterations, one could consider basing the decision to terminate on the smallness of the step lengths $\|X_{k+1}-X_k\|$ and $\|E_{k+1}-E_k\|$.  Details of this approach, for the case in which $E_k$ is absent, can be found in~\cite[Chapter 8.7]{higham2008functions}.

\subsection{Stability}

Stability of the iterative schemes detailed in Theorem~\ref{thm:dpolariter} is relatively easy to establish.  Indeed, the map 
\begin{equation} \label{dpolarmap}
\mathcal{F}\begin{pmatrix} A \\ E \end{pmatrix} = \begin{pmatrix} \mathcal{P}(A) \\ L_{\mathcal{P}}(A,E) \end{pmatrix}
\end{equation}
is idempotent, since $\mathcal{P}(\mathcal{P}(A))=\mathcal{P}(A)$ and $L_{\mathcal{P}}(\mathcal{P}(A),L_{\mathcal{P}}(A,E)) = L_{\mathcal{P}}(A,E)$ by the chain rule.  It follows that any superlinearly convergent iteration for computing $\begin{pmatrix} \mathcal{P}(A) \\ L_{\mathcal{P}}(A,E) \end{pmatrix}$ is automatically stable~\cite[Therorem 4.19]{higham2008functions}.  More precisely, if 
\begin{equation} \label{itermap}
\begin{pmatrix} X_{k+1} \\ E_{k+1} \end{pmatrix} = \begin{pmatrix} g(X_k) \\ L_g(X_k,E_k) \end{pmatrix}
\end{equation}
converges superlinearly to $\begin{pmatrix} \mathcal{P}(X_0) \\ L_{\mathcal{P}}(X_0,E_0) \end{pmatrix}$ for all $X_0$ and $E_0$ sufficiently close to $A$ and $E$, respectively, then the iteration is stable in the sense of~\cite[Definition 4.17]{higham2008functions}.
Moreover, the Fr\'{e}chet derivative of the map~(\ref{dpolarmap}) coincides with the Fr\'{e}chet derivative of the map~(\ref{itermap}) at the fixed point $\begin{pmatrix} \mathcal{P}(A) \\ L_{\mathcal{P}}(A,E) \end{pmatrix}$~\cite[Therorem 4.19]{higham2008functions}.

As an example, the Newton iteration~(\ref{XupdateNewton}-\ref{EupdateNewton}) is superlinearly convergent by virtue of the superlinear (indeed, quadratic) convergence of the corresponding matrix sign function iteration~(\ref{signNewton}).
The Newton-Schulz iteration~(\ref{XupdateNewtonSchulz}-\ref{EupdateNewtonSchulz}) is likewise superlinearly (indeed, quadratically) convergent, provided that the singular values of $A$ lie in the interval $(0,\sqrt{2})$.  Thus, both iterations are stable.  Using, for instance,~(\ref{XupdateNewtonSchulz}-\ref{EupdateNewtonSchulz}), we find that the Fr\'{e}chet derivative of the map~(\ref{itermap}) (and hence of the map~(\ref{dpolarmap})) at $\begin{pmatrix} U \\ K \end{pmatrix} = \begin{pmatrix} \mathcal{P}(A) \\ L_{\mathcal{P}}(A,E) \end{pmatrix}$ is given by
\[
L_{\mathcal{F}}\Big( \begin{pmatrix} U \\ K \end{pmatrix}, \begin{pmatrix} F \\ G \end{pmatrix} \Big) = \begin{pmatrix} F - \frac{1}{2} U(U^*F+F^*U) \\ G - \frac{1}{2} \left[ U(U^*G+G^*U) + K(U^*F+F^*U) + U (K^* F + F^*K)  \right] \end{pmatrix}.
\]
Note that when $U$ is square, the identities $UU^*=I$ and $U^*K=-K^*U$ (by~(\ref{symLfo})) imply that this formula reduces to
\[
L_{\mathcal{F}}\left( \begin{pmatrix} U \\ K \end{pmatrix}, \begin{pmatrix} F \\ G \end{pmatrix} \right) = \begin{pmatrix} \frac{1}{2} (F - U F^*U) \\ \frac{1}{2} (G - U G^*U - U F^*K - K F^* U) \end{pmatrix},
\]
in agreement with~\cite[Theorem 8.19]{higham2008functions}.

\subsection{Condition Number Estimation} \label{sec:condition}

A seemingly natural application of \linebreak Theorem~\ref{thm:dpolariter} is to leverage the iterative scheme~(\ref{Xupdate2}-\ref{Eupdate2}) to estimate the condition number 
\[
\kappa(\mathcal{P},A) = \|L_{\mathcal{P}}(A,\cdot)\| = \sup_{E \in \mathbb{C}^{m \times n}, \atop E \neq 0} \frac{\|L_{\mathcal{P}}(A,E)\|}{\|E\|}
\] 
of the map $\mathcal{P}$ at $A$.  As tempting as it may seem, a much simpler (and undoubtedly more efficient) algorithm is available for estimating $\kappa(\mathcal{P},A)$.  As explained in~\cite[Theorem 8.9]{higham2008functions}, the value of $\kappa(\mathcal{P},A)$ at $A \in \mathbb{C}^{m \times n}$ ($m \ge n$) is $\sigma_n^{-1}$, where $\sigma_n$ denotes the smallest singular value of $A$.  This quantity can be estimated efficiently by applying the power method~\cite[Algorithm 3.19]{higham2008functions} to $(A^*A)^{-1}$.  In most iterative algorithms for computing the polar decomposition, this matrix (or $A^{-1}$) is computed in the first iteration, so the additional cost of computing $\kappa(\mathcal{P},A)$ is negligible.  

Before finishing our discussion of condition number estimation, it is worth pointing out 
a subtlety that arises when considering the polar decomposition of a real square matrix.  If $A$ is real and square ($m=n$), then it can be shown that the condition number of $A$ with respect to \emph{real} perturbations is $2(\sigma_n + \sigma_{n-1})^{-1}$~\cite[Theorem 8.9]{higham2008functions}.  This fact will play a role in our interpretation of certain numerical experiments in Section~\ref{sec:numerical}.

\section{Comparison with Other Methods} \label{sec:comparison}

There are several other methods that can be used to compute the Fr\'{e}chet derivative of the polar decomposition.  Below, we describe a few and compare them with iterative schemes of the form~(\ref{Xupdate2}-\ref{Eupdate2}).

One alternative is to recognize that $L_{\mathcal{P}}(A,E)$ is the solution to a Lyapunov equation.  Indeed, upon noting that $\mathcal{P}(A)^* A=U^*A = H$ is Hermitian, one can differentiate the relation
\[
\mathrm{skew}(\mathcal{P}(A)^* A) = 0
\]
with the aid of the product rule to obtain
\[
\mathrm{skew}(L_{\mathcal{P}}(A,E)^* A + \mathcal{P}(A)^* E) = 0
\]
for any $E \in \mathbb{C}^{m \times n}$.  Substituting $A=UH$ and $\mathcal{P}(A)=U$, and denoting $Y := U^* L_{\mathcal{P}}(A,E) = -L_{\mathcal{P}}(A,E)^* U$, we obtain
\begin{equation} \label{syl}
H Y + Y H = U^* E - E^* U.
\end{equation}
Given $H$, $U$, and $E$, this is a Lyapunov equation in the unknown $Y$, which, by the positive-definiteness of $H$, has a unique solution.  It can be solved using standard algorithms for the solution of Lyapunov and Sylvester equations~\cite{bartels1972solution,golub1979hessenberg}.  It also has theoretical utility, offering an alternative proof of part of Theorem~\ref{thm:signblockmat}, owing to a well-known connection between the solution of Lyapunov and Sylvester equations and the matrix sign function~\cite[Chapter 2.4]{roberts1980linear,higham2008functions}.  Indeed,~(\ref{syl}) is equivalent to the equation
\[
\begin{pmatrix} H & E^* U - U^* E \\ 0 & -H \end{pmatrix} = \begin{pmatrix} I & Y \\ 0 & I \end{pmatrix} \begin{pmatrix} H & 0 \\ 0 & -H \end{pmatrix} \begin{pmatrix} I & Y \\ 0 & I \end{pmatrix}^{-1}.
\]
Taking the sign of both sides, noting that $\mathrm{sign}(H)=I$, and using the fact that the matrix sign function commutes with similarity transformations, we conclude that
\begin{align*}
\mathrm{sign} \begin{pmatrix} H & E^* U - U^* E \\ 0 & -H \end{pmatrix} &= \begin{pmatrix} I & Y \\ 0 & I \end{pmatrix} \left[\mathrm{sign} \begin{pmatrix} H & 0 \\ 0 & -H \end{pmatrix}\right] \begin{pmatrix} I & Y \\ 0 & I \end{pmatrix}^{-1} \\
&= \begin{pmatrix} I & -2Y \\ 0 & -I \end{pmatrix}.
\end{align*}
This is precisely the identity~(\ref{signHW0}), up to a rescaling of $E$.  Its connection with the Lyapunov equation~(\ref{syl}) reveals that the coupled iteration~(\ref{Xupdate2}-\ref{Eupdate2}) is effectively solving~(\ref{syl}) and computing the polar decomposition simultaneously.  In comparison to a naive approach in which~(\ref{syl}) is solved after first computing the polar decomposition, the coupled iteration~(\ref{Xupdate2}-\ref{Eupdate2}) is attractive, as it computes $L_{\mathcal{P}}(A,E)$ at the expense of a few extra matrix-matrix multiplications and additions on top of the computation of $\mathcal{P}(A)$.

When $A$ and $E$ are real, another method for computing Fr\'{e}chet derivative of a matrix function $f$ is to use the complex step approximation~\cite{al2010complex}
\[
L_f(A,E) \approx \mathrm{Im} \left( \frac{f(A+ihE)-f(A)}{h} \right),
\]
where $h$ is a small positive scalar and $\mathrm{Im}(B)$ denotes the imaginary part of a matrix $B$.  By using a pure imaginary step $ih$, this approximation does not suffer from cancellation errors that plague standard finite differencing, allowing $h$ to be taken arbitrarily small~\cite{al2010complex}.  This approximation can be applied to the polar decomposition, but care must be exercised in order to do so correctly.  In particular, a meaningful approximation is obtained only if the conjugate transposes $X_k^*$ appearing in the algorithm are interpreted as transposes $X_k^T$ when evaluating the ``polar decomposition'' of $A+ihE$.
We have put ``polar decomposition'' in quotes since the result of such a computation is the matrix $(A+ihE) \left[(A+ihE)^T (A+ihE)\right]^{-1/2}$, not $\mathcal{P}(A+ihE) = (A+ihE) \left[(A+ihE)^* (A+ihE)\right]^{-1/2}$.
The cost of this approximation is close to the cost of computing two polar decompositions.  

Another approach is to appeal to the relation $\mathcal{P}(A)=A(A^*A)^{-1/2}$.  By~(\ref{Lg}), the Fr\'{e}chet derivative of $\mathcal{P}$ at $A$ in the direction $E$ is given by
\begin{align*}
L_{\mathcal{P}}(A,E) 
&= E (A^*A)^{-1/2} + AL_{x^{-1/2}} (A^*A, E^*A + A^*E) \\
&= E H^{-1} + AL_{x^{-1/2}} (A^*A, E^*A + A^*E).
\end{align*}
Evaluating the second term, the Fr\'{e}chet derivative of the inverse square root, can be reduced to the task of solving a Lyapunov equation, so this approach is essentially of the same complexity as the one based on~(\ref{syl}).

Any of the aforementioned methods, including our own, 
can be applied in two different ways when $A$ is rectangular ($m \times n$ with $m>n$).  One way is to apply the methods verbatim, working at all times with rectangular matrices.  The alternative is to first compute a reduced $QR$ decomposition $A=QR$, where $Q \in \mathbb{C}^{m \times n}$ has orthonormal columns and $R \in \mathbb{C}^{n \times n}$ is upper triangular.  Then, one can compute $\mathcal{P}(R)$ and $L_{\mathcal{P}}(R,Q^*E)$ (which are square matrices) and invoke the identities
\[
U = \mathcal{P}(A) = Q\mathcal{P}(R), \quad H = \mathcal{P}(R)^*R
\]
and
\begin{align*}
L_{\mathcal{P}}(A,E) 
&= L_{\mathcal{P}}(A,QQ^*E) + L_{\mathcal{P}}(A,(I-QQ^*)E) \\
&= Q L_{\mathcal{P}}(R,Q^*E) + (I-QQ^*)EH^{-1}
\end{align*}
to recover $\mathcal{P}(A)$ and $L_{\mathcal{P}}(A,E)$.  The validity of the latter identity is a consequence of~(\ref{dpolarsymmetry}),~(\ref{dpolarperp}), and the fact that $QQ^*=UU^*$.  In summary, computations for rectangular $A$ can be reduced to the square case by performing a reduced $QR$ decomposition of $A$ at the outset.

Finally, when $A$ is square, one more method for computing $L_{\mathcal{P}}(A,E)$ is available, as noted in, for instance,~\cite{kenney1991polar}.  The idea is to make use of the singular value decomposition $A=P\Sigma Q^*$, where $P,Q \in \mathbb{C}^{n \times n}$ are unitary and $\Sigma \in \mathbb{C}^{n \times n}$ is diagonal.  The singular value decomposition is related to the polar decomposition $A=UH$ via the relations $U = PQ^*$ and $H=Q\Sigma Q^*$.  Moreover, the Lyapunov equation~(\ref{syl}) is equivalent to
\[
\Sigma G + G \Sigma =  F - F^*,
\]
where $F = P^* E Q$ and $G = P^* L_{\mathcal{P}}(A,E) Q$~\cite[Equation 2.18]{kenney1991polar}.  Given $\Sigma$ and $F$, this equation admits an explicit solution for the components of $G$. Namely,
\[
G_{ij} = \frac{1}{\sigma_i+\sigma_j} (F_{ij}-\overline{F_{ji}}),
\]
where $\sigma_i$ denotes the $i^{th}$ diagonal entry of $\Sigma$, and $\overline{F_{ji}}$ denotes the complex conjugate of $F_{ji}$.  One then obtains $L_{\mathcal{P}}(A,E)$ from $L_{\mathcal{P}}(A,E) = PGQ^*$.  This method is attractive if the singular value decomposition of $A$ has already been computed, but otherwise it is an expensive approach in general. 

\subsection{Floating Point Operations}  
Relative to the methods listed above, the iterative schemes derived in this paper are distinguished by their efficiency, at least when $n$ is large and the columns of $A$ are close to being orthonormal.  To see this, consider the number of floating point operations needed to compute $\mathcal{P}(A)$ and $L_{\mathcal{P}}(A,E)$.  For simplicity, assume that $A$ and $E$ are real and of size $n \times n$.  Then, to leading order in $n$, and excluding the costs associated with termination criteria in the iterative schemes, the methods have the following computational costs:

\begin{itemize}
\item The iteration~(\ref{XupdateNewton}-\ref{EupdateNewton}) requires $n_{iter}$ matrix inversions (each requiring $2n^3$ flops~\cite[Appendix C]{higham2008functions}) and $2n_{iter}$ matrix multiplications (each requiring $2n^3$ flops), where $n_{iter}$ denotes the number of iterations used.  Its computational cost is thus $n_{iter}(2n^3)+2n_{iter}(2n^3) = 6n_{iter} n^3$ flops.
\item Solving the Lyapunov equation~(\ref{syl}) with a direct method involves diagonalizing $H$ ($9n^3$ flops~\cite[Appendix C]{higham2008functions}) and performing 4 matrix multiplications, for a total of $9n^3+4(2n^3)=17n^3$ flops.  The additional cost of computing $U$, $H=U^*A$, $L_{\mathcal{P}}(A,E) = UY$, and $U^*E$ (assuming that~(\ref{XupdateNewton}) is used to compute $U$) is dominated by the cost of performing $n_{iter}$ matrix inversions and 3 matrix multiplications, bringing the total to $17n^3 + n_{iter} (2n^3) + 3(2n^3) = (23+2n_{iter}) n^3$ flops.
\item The complex step approximation (assuming that~(\ref{XupdateNewton}) is used to compute the polar decomposition of $A$ and $A+ihE$)  requires $2n_{iter}$ matrix inversions, of which $n_{iter}$ involve complex arithmetic.  Since each inversion of a complex matrix requires $n^3$ additions of complex scalars (2 real flops) and $n^3$ multiplications of complex scalars (6 real flops), the computational cost of the complex step approximation is $n_{iter}(2n^3) + n_{iter}(8n^3) = 10n_{iter} n^3$ flops.
\item The method based on the singular value decomposition requires 5 matrix multiplications plus the computation of the SVD.  Assuming, for instance, that the Golub-Reinsch algoirthm ($22n^3$ flops~\cite{golub2012matrix}) is used to compute the SVD, this method's total cost is $5(2n^3) + 22n^3 = 32n^3$ flops.
\end{itemize}
We conclude from this analysis that, for sufficienty large $n$, the iteration~(\ref{XupdateNewton}-\ref{EupdateNewton}) requires fewer floating point operations than its competitors whenever $n_{iter} \le 5$.  Note that this is no longer the case if the costs of computing the residual estimates~(\ref{alphak}-\ref{gammak}) are taken into account.  However, if efficiency is the primary objective, then cheaper termination criteria (based, for instance, on $\|X_k-X_k^{-*}\|$, $\|X_{k+1}-X_k\|$, and/or $\|E_{k+1}-E_k\|$) may be appropriate.


\section{Numerical Experiments} \label{sec:numerical}

\begin{table}[t]
\centering
\resizebox{\columnwidth}{!}{%
\pgfplotstabletypeset[
every head row/.style={after row=\midrule},
columns={0,1,2,3,4,5,6,7,8},
columns/0/.style={fixed,column type/.add={}{|},column name={$k$}},
columns/1/.style={sci,sci e,sci zerofill,precision=1,column type/.add={}{|},column name={$\frac{\|X_k-U\|}{\|U\|}$}},
columns/2/.style={sci,sci e,sci zerofill,precision=1,column type/.add={}{|},column name={$\frac{\|E_k-K\|}{\|K\|}$}}, 
columns/3/.style={sci,sci e,sci zerofill,precision=1,column type/.add={}{|},column name={$\|\alpha_k\|$}}, 
columns/4/.style={sci,sci e,sci zerofill,precision=1,column type/.add={}{|},column name={$\|\beta_k\|$}}, 
columns/5/.style={sci,sci e,sci zerofill,precision=1,column type/.add={}{|},column name={$\|\gamma_k\|$}}, 
columns/6/.style={sci,sci e,sci zerofill,precision=1,column type/.add={}{|},column name={$\|\widetilde{\beta}_k\|$}}, 
columns/7/.style={sci,sci e,sci zerofill,precision=1,column type/.add={}{|},column name={$\|\widetilde{\gamma}_k\|$}}, 
columns/8/.style={sci,sci e,sci zerofill,precision=1,column type/.add={}{},column name={$\mu_k$}}, 
]
{Data/moler.dat}
}
\caption{Nearly orthogonal matrix, $m=n=16$, $\sigma_n(A) = 9.9e{-1}$, $\sigma_{n-1}(A)=1.0e{+0}$, $\kappa(A) = 1.0e{+0}$.}
\label{tab:moler}
\end{table}

\begin{table}[t]
\centering
\resizebox{\columnwidth}{!}{%
\pgfplotstabletypeset[
every head row/.style={after row=\midrule},
columns={0,1,2,3,4,5,6,7,8},
columns/0/.style={fixed,column type/.add={}{|},column name={$k$}},
columns/1/.style={sci,sci e,sci zerofill,precision=1,column type/.add={}{|},column name={$\frac{\|X_k-U\|}{\|U\|}$}},
columns/2/.style={sci,sci e,sci zerofill,precision=1,column type/.add={}{|},column name={$\frac{\|E_k-K\|}{\|K\|}$}}, 
columns/3/.style={sci,sci e,sci zerofill,precision=1,column type/.add={}{|},column name={$\|\alpha_k\|$}}, 
columns/4/.style={sci,sci e,sci zerofill,precision=1,column type/.add={}{|},column name={$\|\beta_k\|$}}, 
columns/5/.style={sci,sci e,sci zerofill,precision=1,column type/.add={}{|},column name={$\|\gamma_k\|$}}, 
columns/6/.style={sci,sci e,sci zerofill,precision=1,column type/.add={}{|},column name={$\|\widetilde{\beta}_k\|$}}, 
columns/7/.style={sci,sci e,sci zerofill,precision=1,column type/.add={}{|},column name={$\|\widetilde{\gamma}_k\|$}}, 
columns/8/.style={sci,sci e,sci zerofill,precision=1,column type/.add={}{},column name={$\mu_k$}}, 
]
{Data/binomial.dat}
}
\caption{Binomial matrix, $m=n=16$, $\sigma_n(A)=2.6e{+0}$, $\sigma_{n-1}(A)=2.6e{+0}$, $\kappa(A) =4.7e{+3}$.}
\label{tab:binomial}
\end{table}

\begin{table}[t]
\centering
\resizebox{\columnwidth}{!}{%
\pgfplotstabletypeset[
every head row/.style={after row=\midrule},
columns={0,1,2,3,4,5,6,7,8},
columns/0/.style={fixed,column type/.add={}{|},column name={$k$}},
columns/1/.style={sci,sci e,sci zerofill,precision=1,column type/.add={}{|},column name={$\frac{\|X_k-U\|}{\|U\|}$}},
columns/2/.style={sci,sci e,sci zerofill,precision=1,column type/.add={}{|},column name={$\frac{\|E_k-K\|}{\|K\|}$}}, 
columns/3/.style={sci,sci e,sci zerofill,precision=1,column type/.add={}{|},column name={$\|\alpha_k\|$}}, 
columns/4/.style={sci,sci e,sci zerofill,precision=1,column type/.add={}{|},column name={$\|\beta_k\|$}}, 
columns/5/.style={sci,sci e,sci zerofill,precision=1,column type/.add={}{|},column name={$\|\gamma_k\|$}}, 
columns/6/.style={sci,sci e,sci zerofill,precision=1,column type/.add={}{|},column name={$\|\widetilde{\beta}_k\|$}}, 
columns/7/.style={sci,sci e,sci zerofill,precision=1,column type/.add={}{|},column name={$\|\widetilde{\gamma}_k\|$}}, 
columns/8/.style={sci,sci e,sci zerofill,precision=1,column type/.add={}{},column name={$\mu_k$}}, 
]
{Data/frank.dat}
}
\caption{Frank matrix, $m=n=16$, $\sigma_n(A)=3.5e{-13}$, $\sigma_{n-1}(A)=8.7e{-1}$, $\kappa(A)=2.3e{+14}$.}
\label{tab:frank}
\end{table}

\begin{table}[t]
\centering
\resizebox{\columnwidth}{!}{%
\pgfplotstabletypeset[
every head row/.style={after row=\midrule},
columns={0,1,2,3,4,5,6,7,8},
columns/0/.style={fixed,column type/.add={}{|},column name={$k$}},
columns/1/.style={sci,sci e,sci zerofill,precision=1,column type/.add={}{|},column name={$\frac{\|X_k-U\|}{\|U\|}$}},
columns/2/.style={sci,sci e,sci zerofill,precision=1,column type/.add={}{|},column name={$\frac{\|E_k-K\|}{\|K\|}$}}, 
columns/3/.style={sci,sci e,sci zerofill,precision=1,column type/.add={}{|},column name={$\|\alpha_k\|$}}, 
columns/4/.style={sci,sci e,sci zerofill,precision=1,column type/.add={}{|},column name={$\|\beta_k\|$}}, 
columns/5/.style={sci,sci e,sci zerofill,precision=1,column type/.add={}{|},column name={$\|\gamma_k\|$}}, 
columns/6/.style={sci,sci e,sci zerofill,precision=1,column type/.add={}{|},column name={$\|\widetilde{\beta}_k\|$}}, 
columns/7/.style={sci,sci e,sci zerofill,precision=1,column type/.add={}{|},column name={$\|\widetilde{\gamma}_k\|$}}, 
columns/8/.style={sci,sci e,sci zerofill,precision=1,column type/.add={}{},column name={$\mu_k$}}, 
]
{Data/frankmodified.dat}
}
\caption{Modified Frank matrix, $m=n=16$,  $\sigma_n(A)=3.5e{-13}$, $\sigma_{n-1}(A)=3.5e{-13}$, $\kappa(A)=2.3e{+14}$.}
\label{tab:frankmodified}
\end{table}

\begin{table}[t]
\centering
\resizebox{\columnwidth}{!}{%
\pgfplotstabletypeset[
every head row/.style={after row=\midrule},
columns={0,1,2,3,4,5,6,7,8},
columns/0/.style={fixed,column type/.add={}{|},column name={$k$}},
columns/1/.style={sci,sci e,sci zerofill,precision=1,column type/.add={}{|},column name={$\frac{\|X_k-U\|}{\|U\|}$}},
columns/2/.style={sci,sci e,sci zerofill,precision=1,column type/.add={}{|},column name={$\frac{\|E_k-K\|}{\|K\|}$}}, 
columns/3/.style={sci,sci e,sci zerofill,precision=1,column type/.add={}{|},column name={$\|\alpha_k\|$}}, 
columns/4/.style={sci,sci e,sci zerofill,precision=1,column type/.add={}{|},column name={$\|\beta_k\|$}}, 
columns/5/.style={sci,sci e,sci zerofill,precision=1,column type/.add={}{|},column name={$\|\gamma_k\|$}}, 
columns/6/.style={sci,sci e,sci zerofill,precision=1,column type/.add={}{|},column name={$\|\widetilde{\beta}_k\|$}}, 
columns/7/.style={sci,sci e,sci zerofill,precision=1,column type/.add={}{|},column name={$\|\widetilde{\gamma}_k\|$}}, 
columns/8/.style={sci,sci e,sci zerofill,precision=1,column type/.add={}{},column name={$\mu_k$}}, 
]
{Data/rect.dat}
}
\caption{Rectangular matrix, $m=16$, $n=5$,  $\sigma_n(A)=2.5e{-1}$, $\sigma_{n-1}(A)=2.3e{+0}$, $\kappa(A)=5.8e{+1}$.}
\label{tab:rect}
\end{table}

To illustrate the performance of the iterative schemes derived in this paper, we have computed the Fr\'{e}chet derivative of the polar decomposition for the following matrices obtained from MATLAB's matrix gallery.  Note that the first three matrices are identical to those considered in~\cite[Chapter 8.9]{higham2008functions}.
\begin{enumerate}
\item A nearly orthogonal matrix, \verb$orth(gallery('moler',16))+ones(16)*1e-3$.
\item A binomial matrix, \verb+gallery('binomial',16)+.
\item The Frank matrix, \verb$gallery('frank',16)$.
\item A modification of the Frank matrix obtained by setting its second smallest singular value equal to its smallest singular value.  That is, $A = P \widetilde{\Sigma} Q^*$ where $P \Sigma Q^*$ is the singular value decomposition of the Frank matrix, $\widetilde{\Sigma}_{ii} = \Sigma_{ii}$ for $i \neq 15$, and $\widetilde{\Sigma}_{15,15} = \Sigma_{16,16}$.
\item A rectangular matrix given by the first 5 columns of the binomial matrix.
\end{enumerate}
We computed $\mathcal{P}(A)$ and $L_{\mathcal{P}}(A,E)$ for each $A$ listed above, with $E$ a matrix (of the same dimensions as $A$) consisting of random entries sampled from a normal distribution with mean 0 and variance 1.  We used the Newton iteration~(\ref{XupdateNewtonscaled}-\ref{EupdateNewtonscaled}) with scaling parameter~(\ref{scale1inf}) for the square matrices and its generalization~(\ref{XupdateNewtonrect}-\ref{EupdateNewtonrect}) with scaling parameter~(\ref{scale1infrect}) for the rectangular matrix.  To terminate the iterations, we used~(\ref{terminationcriteria}) with $\delta=\varepsilon=10^{-14}$ and $\|\cdot\|$ equal to the Frobenius norm.  To compute the ``exact'' values of $\mathcal{P}(A)$ and $L_{\mathcal{P}}(A,E)$, we used the singular value decomposition, as explained in the last paragraph of Section~\ref{sec:comparison}.

Note that for simplicity, we used scaling throughout the entire iteration, even though the scaling parameter $\mu_k$ approaches 1 near convergence.  A more efficient approach is to switch to an unscaled iteration after a certain point. A heuristic for deciding when to do so is detailed in~\cite[Chapter 8.9]{higham2008functions}.

Tables~\ref{tab:moler}-\ref{tab:rect} show the values of several quantities monitored during the iterations.  The first two columns show the relative errors $\frac{\|X_k - U\|}{\|U\|}$ and $\frac{\|E_k-K\|}{\|K\|}$, where $U=\mathcal{P}(A)$ and $K=L_{\mathcal{P}}(A,E)$.  The next three columns show the norms of~(\ref{alphak}-\ref{gammak}), which are the quantities we used to determine when to terminate the iterations.  Recall that~(\ref{betak}) and~(\ref{gammak}) are computable approximations to $H_k\Omega_k - \Omega_k H_k$ and $H_k S_k + S_k H_k$, respectively.  We have denoted $\widetilde{\beta}_k = H_k\Omega_k - \Omega_k H_k$ and $\widetilde{\gamma}_k = H_k S_k + S_k H_k$ in the tables and recorded their norms in the seventh and eighth columns. 
Finally, the last column of the tables shows the value of the scaling parameter $\mu_k$.  All norms appearing in the table headers are the Frobenius norm.  In the caption of each table, we have made note of the dimensions of the matrix $A$, the smallest and second smallest singular values $\sigma_n(A)$ and $\sigma_{n-1}(A)$ of $A$, respectively, and the condition number $\kappa(A)$ of $A$.

Tables~\ref{tab:moler}, \ref{tab:binomial}, and~\ref{tab:rect} illustrate the effectiveness of the iteration on relatively well-conditioned matrices.  In all three cases, small relative errors in both $X_k$ and $E_k$ are achieved simultaneously, and convergence is detected appropriately by the termination criteria~(\ref{terminationcriteria}).  Comparison of the columns labeled $\|\beta_k\|$ and $\|\gamma_k\|$ with the columns labeled $\|\widetilde{\beta}_k\|$ and $\|\widetilde{\gamma}_k\|$, respectively, lends credence to the asymptotic accuracy of the approximations $\beta_k \approx \widetilde{\beta}_k$ and $\gamma_k \approx \widetilde{\gamma}_k$, at least until roundoff errors begin to intervene.

Tables~\ref{tab:frank} and~\ref{tab:frankmodified} illustrate what can go wrong when $A$ is ill-conditioned.  In the case of Table~\ref{tab:frankmodified}, the matrix $A$ (the modified Frank matrix) has condition number $\kappa(A)=2.3e{+14}$, and its two smallest singular values are both close to zero: $\sigma_n(A)=\sigma_{n-1}(A)=3.5e{-13}$.  As a consequence, the condition number of $\mathcal{P}$ with respect to real perturbations (as explained in Section~\ref{sec:condition}) is  $2(\sigma_n + \sigma_{n-1})^{-1} = 2.9e{+12}$, and we cannot expect much more than 3 or 4 digits of relative accuracy in double precision arithmetic when approximating $\mathcal{P}(A)$, much less $L_{\mathcal{P}}(A,E)$.  This expectation is born out in Table~\ref{tab:frankmodified}.  A more subtle phenomenon occurs in Table~\ref{tab:frank}.  There, the matrix $A$ (the Frank matrix) has condition number $\kappa(A)=2.3e{+14}$ as well, but only one of its singular values is close to zero.  Namely, $\sigma_n(A)=3.5e{-13}$, but $\sigma_{n-1}(A)=8.7e{-1}$.  As a consequence, $\mathcal{P}$ is very well-conditioned with respect to real perturbations, having condition number $2(\sigma_n + \sigma_{n-1})^{-1} = 1.2e{+0}$.  Curiously, the result is that $\mathcal{P}(A)$ is approximated very accurately, but $L_{\mathcal{P}}(A,E)$ is not.  The fact that the performance of the Newton iteration~(\ref{XupdateNewtonscaled}) is largely unaffected by poorly conditioned $A$ (unless $A$ has two singular values close to zero) has been noted in~\cite[Chapter 8.9]{higham2008functions}.  The observation that, in contrast, it takes only one near-zero singular value to corrupt the computation of $L_{\mathcal{P}}(A,E)$ via the iteration~(\ref{XupdateNewtonscaled}-\ref{EupdateNewtonscaled}) deserves further study.

\section{Conclusion}

This paper has derived iterative schemes for computing the Fr\'{e}chet derivative of the polar decomposition.  The structure of these iterative schemes lends credence to the mantra that differentiating an iteration for computing $f(A)$ leads to an iteration for computing $L_f(A,E)$.  It would be interesting to determine what conditions on a matrix function $f$ ensure that this mantra bears out in practice.
Certainly being a primary matrix function suffices, but the results of the present paper suggest that such a construction might work in a more general setting.  

On a more specific level, several aspects of this paper warrant further consideration.  While the termination criteria devised in Section~\ref{sec:termination} appear to work well in practice, a more careful analysis of their effectiveness is lacking.  In addition, it would be of interest to better understand the behavior of the iterative scheme~(\ref{XupdateNewtonscaled}-\ref{EupdateNewtonscaled}) on ill-conditioned matrices.

\appendix
\section{Approximate Residuals} \label{sec:appendix}

In this section, we prove the validity of~(\ref{HWapprox}-\ref{HSapprox}).  Suppressing the subscript $k$ for the remainder of this section, our goal is to show that if
\begin{align}
\beta &= \frac{1}{2} \left( X^*X X^*E - X^* E X^*X \right), \label{betatilde} \\
\gamma &= (X^*E + E^* X) - \beta, \label{gammatilde}
\end{align}
then
\begin{align*}
\beta &= H\Omega - \Omega H + O(\|H^2-I\|^2 + \|H^2-I\| \|HS+SH\|), \\
\gamma &= HS+SH +  O(\|H^2-I\|^2 + \|H^2-I\| \|HS+SH\|). 
\end{align*}
Now since
\[
H^2 - I = 2(H-I) + (H-I)^2, 
\]
the norms of $H^2-I$ and $H-I$ are asymptotically equal, up to a factor of 2. Thus,
it is enough to show that
\begin{align}
\beta &= H\Omega - \Omega H + O(\|H-I\|^2 + \|H-I\| \|HS+SH\|), \label{HWapprox2} \\
\gamma &= HS+SH +  O(\|H-I\|^2 + \|H-I\| \|HS+SH\|). \label{HSapprox2}
\end{align}
The following lemma reduces this task to the verification of~(\ref{HWapprox2}).
\begin{lemma} \label{lemma:XE}
We have
\[
(H \Omega - \Omega H) + (H S + S H) = X^* E + E^* X.
\]
\end{lemma}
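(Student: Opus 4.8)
The plan is to reduce everything to the relations supplied by Lemma~\ref{lemma:relation}. Recall that, suppressing the subscript $k$ as in the rest of this section, those relations read $X = UH$, $\Omega = \mathrm{skew}(U^*E)$, and $S = \mathrm{sym}(U^*E)$, where $U^*U = I$ and $H$ is Hermitian. In particular $U^*E = \Omega + S$, and since $\Omega^* = -\Omega$ and $S^* = S$, taking adjoints gives $E^*U = (U^*E)^* = S - \Omega$.

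First I would rewrite the right-hand side purely in terms of $U^*E$ and $U^*U = I$. Using $X = UH$ with $H$ Hermitian,
\[
X^*E + E^*X = H U^*E + E^*U H.
\]
Then I would substitute $U^*E = \Omega + S$ and $E^*U = S - \Omega$ and expand, obtaining
\[
H(\Omega + S) + (S - \Omega)H = (H\Omega - \Omega H) + (HS + SH),
\]
which is precisely the claimed identity.

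There is no genuine obstacle here; the argument is a one-line manipulation once Lemma~\ref{lemma:relation} is in hand. The only point requiring care is the bookkeeping of Hermitian versus skew-Hermitian parts — namely remembering that $H$ and $S$ are Hermitian while $\Omega$ is skew-Hermitian, so that $(U^*E)^*$ splits as $S - \Omega$ rather than $\Omega - S$. Everything else follows from the distributivity of matrix multiplication.
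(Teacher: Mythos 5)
Your proof is correct and follows exactly the same route as the paper: write $X^*E + E^*X = HU^*E + E^*UH$ using $X = UH$, substitute $U^*E = \Omega + S$ and $E^*U = S - \Omega$, and expand. There is no meaningful difference between the two arguments.
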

\begin{proof}
By~(\ref{relationXY}) and the equalities $H = H^*$, $U^*E = \Omega+S$, $\Omega^* = -\Omega$, and $S^*=S$, we have
\begin{align*}
X^* E + E^* X 
&= H U^* E + E^* U H \\
&= H (\Omega+S) + (\Omega+S)^* H \\
&= H (\Omega+S) + (-\Omega+S) H \\
&= (H \Omega - \Omega H) + (H S + S H).
\end{align*}
\end{proof}
It follows from the preceding lemma that if $\beta$ satisfies~(\ref{HWapprox2}), then $\gamma = (X^*E + E^* X) - \beta$ automatically satisfies~(\ref{HSapprox2}).

To prove~(\ref{HWapprox2}), we begin by noting a few useful relations.  
\begin{lemma} \label{lemma:Happrox}
For any $B \in \mathbb{C}^{n \times n}$, 
\begin{align*}
H(HB-BH) &= HB-BH + O(\|H-I\|^2), \\
(HB-BH)H &= HB-BH + O(\|H-I\|^2).
\end{align*}
\end{lemma}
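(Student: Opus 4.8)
The plan is to exploit the elementary fact that a commutator with $H$ is insensitive to the identity component of $H$. Write $N = H - I$, so that $\|N\| = \|H-I\|$. The first step is to observe that
\[
HB - BH = (I+N)B - B(I+N) = NB - BN,
\]
whence $\|HB - BH\| \le 2\|N\|\,\|B\| = O(\|H-I\|)$ in any submultiplicative norm, with the implied constant depending only on the (fixed) matrix $B$.

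Given this, the two identities follow by a single line of algebra each. For the first, expand
\[
H(HB-BH) = (I+N)(HB-BH) = (HB-BH) + N(HB-BH),
\]
and bound the trailing term using the first step: $\|N(HB-BH)\| \le \|N\| \cdot 2\|N\|\,\|B\| = O(\|H-I\|^2)$. For the second, expand instead $(HB-BH)H = (HB-BH)(I+N) = (HB-BH) + (HB-BH)N$ and bound $\|(HB-BH)N\| \le 2\|N\|\,\|B\| \cdot \|N\| = O(\|H-I\|^2)$ in the same way.

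There is no real obstacle here; the only thing to keep in mind is the bookkeeping point that everything takes place in finite dimensions, so any fixed submultiplicative matrix norm may be used and the constants absorbed into the $O(\cdot)$ notation depend only on $\|B\|$. In particular the proof uses neither the positive-definiteness nor even the Hermiticity of $H$, and needs no earlier result from the paper.
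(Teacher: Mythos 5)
Your proof is correct and is essentially the paper's argument: both expand $H = I + (H-I)$ and observe that the residual is a product of two factors of order $\|H-I\|$. The paper states the resulting exact identities (e.g.\ $H(HB-BH) = HB-BH - (H-I)B(H-I) + (H-I)^2 B$) directly, whereas you first note $HB-BH = (H-I)B - B(H-I)$ and then multiply by $N = H-I$; the underlying algebra is the same, and your organization is if anything a touch cleaner.
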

\begin{proof}
These relations follow from the identities
\begin{align*}
H(HB-BH) = HB-BH - (H-I)B(H-I) + (H-I)^2 B, \\
(HB-BH)H = HB-BH + (H-I)B(H-I) - B(H-I)^2.
\end{align*}
\end{proof}

\begin{lemma} \label{lemma:XTXapprox}
We have
\[
X^*X = 2H-I + O(\|H-I\|^2).
\]
\end{lemma}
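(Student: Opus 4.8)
The plan is to reduce $X^{*}X$ to $H^{2}$ using the relationship between $X_k$ and $H_k$ established above, and then to expand $H^{2}$ about the identity.

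First I would invoke Lemma~\ref{lemma:relation}, specifically~(\ref{relationXY}), which (with the subscript $k$ suppressed, as throughout this section) states that $X = UH$. Since $U$ has orthonormal columns, $U^{*}U = I$, and since $H$ is Hermitian, $H^{*}=H$; hence
\[
X^{*}X = H^{*}U^{*}UH = H^{2}.
\]

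Second, I would write $H = I + (H-I)$ and expand:
\[
H^{2} = I + 2(H-I) + (H-I)^{2} = (2H-I) + (H-I)^{2}.
\]
In any submultiplicative matrix norm, $\|(H-I)^{2}\| \le \|H-I\|^{2}$, so $H^{2} = 2H - I + O(\|H-I\|^{2})$, which together with $X^{*}X = H^{2}$ gives the claim.

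The computation is entirely elementary, so there is no substantive obstacle; the only point meriting a moment's care is that the Hermitian matrix $H = H_k$ used here is precisely the one furnished by Lemma~\ref{lemma:YkOmegak} (and identified with $X_k$ via Lemma~\ref{lemma:relation}), whose Hermiticity has already been established, so that the passage $X^{*}X = H^{*}H = H^{2}$ is justified rather than an appeal to an arbitrary square root of $X^{*}X$.
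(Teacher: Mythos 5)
Your proof is correct and follows exactly the paper's route: identify $X^{*}X = H^{*}U^{*}UH = H^{2}$ via $X = UH$, then expand $H^{2} = 2H - I + (H-I)^{2}$. The paper's proof is the same two-line argument, so there is nothing to add.
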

\begin{proof}
Use the identity 
\[
H^2 = 2H-I + (H-I)^2
\]
together with the fact that $X^*X = H U^*U H = H^2$.
\end{proof}

Now consider~(\ref{betatilde}).  Substituting $X^*X = 2H-I + O(\|H-I\|^2)$ and $X^*E = HU^*E = H(\Omega+S)$ gives, after simplification,
\begin{align*}
\beta
&= H \left( H(\Omega+S) - (\Omega+S)H \right) + O(\|H-I\|^2).
\end{align*}
Applying Lemma~\ref{lemma:Happrox} with $B = \Omega+S$ gives
\begin{align*}
\beta 
&= H(\Omega+S) - (\Omega+S)H + O(\|H-I\|^2) \\
&= (H\Omega - \Omega H) + (HS-SH) + O(\|H-I\|^2).
\end{align*}
We will finish the proof of~(\ref{HWapprox2}) by showing that
\[
HS-SH = O\left( \|H-I\|^2 + \|H-I\| \|HS+SH\| \right).
\]
Averaging the two equalities in Lemma~\ref{lemma:Happrox} with $B=S$ gives
\begin{align*}
HS-SH 
&= \frac{1}{2} \left[ H(HS-SH) + (HS-SH)H \right] + O( \|H-I\|^2) 
\end{align*}
Finally, an algebraic manipulation shows that the term in brackets above is equal to
\[
H(HS-SH) + (HS-SH)H = (H-I)(HS+SH) - (HS+SH)(H-I),
\]
and so it is of order $\|H-I\| \|HS+SH\|$.

\bibliographystyle{siamplain}
\bibliography{references}

\end{document}